%
\documentclass[12pt, reqno]{amsart}
\usepackage{amsmath, amsthm, amscd, amsfonts, amssymb, graphicx, color, mathrsfs}
\usepackage[bookmarksnumbered, colorlinks, plainpages]{hyperref}
\usepackage[all]{xy}
\usepackage{slashed}

\usepackage{soul}
\usepackage{cancel}
\usepackage{ulem}

\textheight 22.5truecm \textwidth 14.5truecm
\setlength{\oddsidemargin}{0.35in}\setlength{\evensidemargin}{0.35in}

\setlength{\topmargin}{-.5cm}

\newtheorem{theorem}{Theorem}[section]

\newtheorem{proposition}[theorem]{Proposition}

\theoremstyle{definition}

\newtheorem{example}[theorem]{Example}

\theoremstyle{remark}
\newtheorem{remark}[theorem]{Remark}
\numberwithin{equation}{section}

\begin{document}
\setcounter{page}{1}

\title[  On the  nuclear trace  of Fourier integral  operators in $L^p$-spaces ]{ On the  nuclear trace  of Fourier integral  operators in $L^p$-spaces}

\author[D. Cardona]{Duv\'an Cardona}
\address{
  Duv\'an Cardona:
  \endgraf
  Department of Mathematics  
  \endgraf
  Pontificia Universidad Javeriana
  \endgraf
  Bogot\'a
  \endgraf
  Colombia
  \endgraf
  {\it E-mail address} {\rm d.cardona@uniandes.edu.co;
duvanc306@gmail.com}
  }

\subjclass[2010]{Primary {35S30; Secondary 58J40}.}

\keywords{Fourier integral operator, Pseudo-differential operator,  nuclear operator, nuclear trace, spectral trace, compact Lie group, compact homogeneous manifolds}

\begin{abstract}
In this paper we provide characterizations for the nuclearity of Fourier integral operators on $\mathbb{R}^n,$ on the discrete group $\mathbb{Z}^n,$ arbitrary compact Lie groups and compact homogeneous manifolds.  We also investigate the nuclear trace of these operators.
\textbf{MSC 2010.} Primary {35S30; Secondary 58J40}.
\end{abstract} \maketitle
\tableofcontents

\section{Introduction}
In this paper we characterize the $r$-nuclearity of Fourier integral operators on Lebesgue spaces. Fourier integral operators will be considered in $\mathbb{R}^n,$ the discrete group $\mathbb{Z}^n,$ the $n$-dimensional torus, arbitrary compact Lie groups and symmetric spaces (compact homogeneous manifolds). We also  give formulae for the nuclear trace of these operators. Explicit examples will be given on $\mathbb{Z}^n,$ the torus $\mathbb{T}^n$, the special unitary group $\textnormal{SU}(2),$ and the projective complex plane $\mathbb{C}\mathbb{P}^2.$ Our main theorems will be applied to the characterization of $r$-nuclear pseudo-differential operators defined by the Weyl quantization procedure.

\subsection{Outline of the paper}

Let us recall that  the Fourier integral operators (FIOs) on $\mathbb{R}^n,$ are integral operators  of the form
\begin{equation}\label{definition}
Ff(x):=\int_{\mathbb{R}^n}e^{i\phi(x,\xi)}a(x,\xi)(\mathscr{F}f)(\xi)d\xi,\,\,\,\,\,\,\,\,\,\,\,\,\,\,\,\,\,\,
\end{equation}
where $\mathscr{F}_{ }f$ is the Fourier transform of  $f,$ or in a more general setting, linear integral operators  formally defined by
\begin{equation}
Tf(x):=\int_{\mathbb{R}^{2n}}e^{i\phi(x,\xi)-i2\pi  y\cdot\xi}a(x,y,\xi)f(y)dyd\xi.
\end{equation}
As it is well known, FIOs are used to 
express solutions to Cauchy problems of hyperbolic equations  as well as  for  obtaining   asymptotic formulas for the Weyl eigenvalue function associated to geometric operators (see H\"ormander \cite{Hor71,Hor1,Hor2} and Duistermaat and H\"ormander \cite{DuiHor}).  

According to the theory of FIOs developed by H\"ormander \cite{Hor71}, the phase functions $\phi$  are positively homogeneous
of order 1 and they are considered smooth at $\xi\neq 0,$  while the  symbols are considered satisfying estimates of the form
\begin{equation}
\sup_{(x,y)\in K}|\partial_x^\beta\partial^\alpha_\xi a(x,y,\xi)|\leq C_{\alpha,\beta,K}(1+|\xi|)^{\kappa-|\alpha|},
\end{equation}
for every compact subset $K$ of $\mathbb{R}^{2n}.$  Let us observe that   $L^p$-properties for FIOs can be found in the references
H\"ormander \cite{Hor71}, Eskin\cite{Eskin},
Seeger, Sogge and Stein\cite{SSS91},
Tao\cite{Tao}, 
 Miyachi \cite{Miyachi}, Peral\cite{Peral}, Asada and Fujiwara\cite{AF}, Fujiwara\cite{Fuji}, Kumano-go\cite{Kumano-go}, Coriasco and Ruzhansky \cite{CoRu1,CoRu2}, Ruzhansky and Sugimoto \cite{RuzSugi01,RuzSugi02,RuzSugi03,RuzSugi}, Ruzhansky \cite{M. Ruzhansky}, and Ruzhansky and Wirth \cite{RWirth}.

A fundamental problem in the theory of Fourier integral operators  is that of classifying the interplay between the properties of a symbol and the properties of its  associated Fourier integral operator.

In this paper our main goal  is to give, in terms of symbol criteria and with  simple proofs,  characterizations for the $r$-nuclearity of Fourier integral operators on Lebesgue spaces. Let us mention that this problem has been considered in the case of pseudo-differential operators  by several authors. However, the obtained results belong to one of two possible approaches. The first ones, are sufficient conditions on the symbol  trough of summability  conditions with the attempt of  studying  the distribution of the spectrum for the corresponding pseudo-differential operators. The second ones, provide roughly speaking, a decomposition for the symbols associated to nuclear operators, in terms of the Fourier transform, where the spatial variables and the momentum variables can be analyzed separately. Nevertheless, in both cases the results can be applied to obtain Grothendieck-Lidskii's formulae on the summability of eigenvalues when the operators are considered acting in $L^p$ spaces. 

Necessary conditions for the $r$-nuclearity of pseudo-differential operators in the compact setting can be summarized as follows. The nuclearity and the $2/3$-nuclearity of pseudo-differential operators on the circle $\mathbb{S}^1$ and on the lattice $\mathbb{Z}$ can be found in Delgado and Wong \cite{DW}. Later, the $r$-nuclearity of pseudo-differential operators was extensively developed on arbitrary compact Lie groups and  on (closed) compact manifolds by Delgado and Ruzhansky in the works \cite{DR,DR1,DR3,DR5,DRboundedvariable2} and by the author in \cite{Cardona}; other conditions can be found  in the works \cite{DRboundedvariable,DRB,DRB2}.  Finally, the subject was treated for  compact manifolds with boundary by Delgado, Ruzhansky, and Tokmagambetov in \cite{DRTk}. 

On the other hand, characterizations for nuclear operators in terms of decomposition  of the symbol trough of the Fourier transform were investigated by Ghaemi,  Jamalpour Birgani, and  Wong in \cite{Ghaemi,Ghaemi2,Majid} for $\mathbb{S}^1,\mathbb{Z}$ and also for arbitrary compact and Hausdorff groups. Finally the subject has been considered for pseudo-multipliers associated to  the harmonic oscillator (which can be qualified as pseudo-differential operators according to the Ruzhansky-Tokmagambetov calculus when the reference operators is the quantum harmonic oscillator) in the works of the author \cite{BarrazaCardona2,Cardonabrief,BarrazaCardona}.

\subsection{Nuclear Fourier integral operators} In order to present our main result we recall the notion of nuclear operators.  By following the classical reference  Grothendieck \cite{GRO}, we  recall that a densely defined linear operator $T:D(T)\subset E\rightarrow F$  (where $D(T)$ is the domain of $T,$ and $E,F$ are choose to be Banach spaces) extends to a  $r$-nuclear operator from $E$ into $F$, if
there exist  sequences $(e_n ')_{n\in\mathbb{N}_0}$ in $ E'$ (the dual space of $E$) and $(y_n)_{n\in\mathbb{N}_0}$ in $F$ such that, the discrete representation
\begin{equation}\label{nuc}
Tf=\sum_{n\in\mathbb{N}_0} e_n'(f)y_n,\,\,\, \textnormal{ with }\,\,\,\sum_{n\in\mathbb{N}_0} \Vert e_n' \Vert^r_{E'}\Vert y_n \Vert^r_{F}<\infty,
\end{equation} holds true for all $f\in D(T).$
\noindent The class of $r-$nuclear operators is usually endowed with the natural semi-norm
\begin{equation}
n_r(T):=\inf\left\{ \left\{\sum_n \Vert e_n' \Vert^r_{E'}\Vert y_n \Vert^r_{F}\right\}^{\frac{1}{r}}: T=\sum_n e_n'\otimes y_n \right\}
\end{equation}
\noindent and, if $r=1$, $n_1(\cdot)$ is a norm and we obtain the ideal of nuclear operators. In addition, when $E=F$ is a Hilbert space and  $r=1$ the definition above agrees with that of   trace class operators. For the case of Hilbert spaces $H$, the set of $r$-nuclear operators agrees with the Schatten-von Neumann class of order $r$ (see Pietsch  \cite{P,P2}).\\
\\
In order to characterize the $r$-nuclearity of Fourier integral operators on $\mathbb{R}^n$, we will use (same that in the references mentioned above)   Delgado's characterization  (see \cite{D2}), for  nuclear integral  operators on Lebesgue spaces defined in  $\sigma$-finite measure spaces, which in this case will  be applied to $L^p(\mathbb{R}^n)$-spaces. Consequently, we will prove that  $r$-nuclear Fourier integral operators defined as in \eqref{definition}  have a nuclear trace given by
\begin{eqnarray}
\textnormal{Tr}(F)=\int\limits_{\mathbb{R}^{n}  }\int\limits_{\mathbb{R}^{n}} e^{i\phi(x,\xi)-i2\pi x\cdot \xi}a(x,\xi)dx\,d\xi.
\end{eqnarray}

In this paper our main result  is the following theorem.

\begin{theorem}\label{MainTheorem}
 Let  $0<r\leq 1.$ Let $a(\cdot,\cdot)$ be a symbol such that $a(x,\cdot)\in L^1_{loc}(\mathbb{R}^n),$ $a.e.w.,$  $x\in \mathbb{R}^n.$ Let $2\leq p_1<\infty,$   $1\leq p_2<\infty,$  and let $F$ be the Fourier integral operator associated to $a(\cdot,\cdot).$ Then, $F:L^{p_1}(\mathbb{R}^n)\rightarrow L^{p_2}(\mathbb{R}^n)$ is $r$-nuclear, if and only if,  the symbol $a(\cdot,\cdot)$  admits a decomposition of the form
\begin{equation}\label{symboldecompositionT}
a(x,\xi)=e^{-i\phi(x,\xi)}\sum_{k=1}^{\infty}h_{k}(x)(\mathscr{F}^{-1}{g}_k)(\xi),\,\,\,a.e.w.,\,\,(x,\xi),
\end{equation} where   $\{g_k\}_{k\in\mathbb{N}}$ and $\{h_k\}_{k\in\mathbb{N}}$ are sequences of functions satisfying 
\begin{equation}
\sum_{k=0}^{\infty}\Vert g_k\Vert^r_{L^{p_1'}}\Vert h_{k}\Vert^r_{L^{p_2}}<\infty.
\end{equation}
\end{theorem}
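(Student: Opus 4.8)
The plan is to reduce the statement to Delgado's characterization of $r$-nuclear integral operators on Lebesgue spaces over $\sigma$-finite measure spaces (see \cite{D2}), by first realizing $F$ as an integral operator and then transporting the resulting kernel decomposition to a symbol decomposition by means of the Fourier transform. First I would compute the Schwartz kernel of $F$. Inserting $(\mathscr{F}f)(\xi)=\int_{\mathbb{R}^n}e^{-i2\pi y\cdot\xi}f(y)\,dy$ into the definition \eqref{definition} and exchanging the order of integration yields
\[
Ff(x)=\int_{\mathbb{R}^n}K_F(x,y)f(y)\,dy,\qquad K_F(x,y)=\int_{\mathbb{R}^n}e^{i\phi(x,\xi)-i2\pi y\cdot\xi}a(x,\xi)\,d\xi,
\]
where the hypothesis $a(x,\cdot)\in L^1_{loc}$ together with the summability below will be used to make these manipulations rigorous. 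The key observation is that, for a.e.\ fixed $x$, the function $y\mapsto K_F(x,y)$ is the Fourier transform of $\xi\mapsto e^{i\phi(x,\xi)}a(x,\xi)$, so by Fourier inversion
\[
e^{i\phi(x,\xi)}a(x,\xi)=\big(\mathscr{F}^{-1}_yK_F(x,\cdot)\big)(\xi).
\]
This identity is the bridge between the two formulations.

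Next I would invoke Delgado's theorem. Since $(\mathbb{R}^n,dx)$ is $\sigma$-finite, $2\le p_1<\infty$, and $1\le p_2<\infty$, the characterization applies and asserts that $F:L^{p_1}(\mathbb{R}^n)\to L^{p_2}(\mathbb{R}^n)$ is $r$-nuclear if and only if its kernel admits a representation
\[
K_F(x,y)=\sum_{k=1}^\infty h_k(x)g_k(y),\qquad \sum_{k=1}^\infty\|g_k\|_{L^{p_1'}}^r\|h_k\|_{L^{p_2}}^r<\infty,
\]
with $h_k\in L^{p_2}(\mathbb{R}^n)$ and $g_k\in L^{p_1'}(\mathbb{R}^n)$. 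It therefore remains to show that this kernel decomposition is equivalent to the claimed symbol decomposition \eqref{symboldecompositionT}, with the very same summability constants.

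For the equivalence of the two decompositions I would argue in both directions through the bridge identity. Assuming \eqref{symboldecompositionT}, one takes the Fourier transform in $\xi$ of $e^{i\phi(x,\xi)}a(x,\xi)=\sum_k h_k(x)(\mathscr{F}^{-1}g_k)(\xi)$ and uses $\mathscr{F}\mathscr{F}^{-1}=\mathrm{id}$ to obtain exactly $K_F(x,y)=\sum_k h_k(x)g_k(y)$, so Delgado's theorem gives $r$-nuclearity. Conversely, if $F$ is $r$-nuclear, Delgado's theorem provides the kernel decomposition, and applying $\mathscr{F}^{-1}_y$ termwise, together with the bridge identity, produces \eqref{symboldecompositionT}. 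I expect the main obstacle to be the justification of the termwise (inverse) Fourier transform, namely that the series can be interchanged with $\mathscr{F}^{-1}_y$ in a suitable topology. This is precisely where the hypothesis $p_1\ge 2$ is essential: then $p_1'\le 2$, and the Hausdorff--Young inequality guarantees that $\mathscr{F}^{-1}:L^{p_1'}(\mathbb{R}^n)\to L^{p_1}(\mathbb{R}^n)$ is bounded, so that $\|\mathscr{F}^{-1}g_k\|_{L^{p_1}}\le \|g_k\|_{L^{p_1'}}$ controls the terms and legitimizes the interchange in $L^{p_1}$. Once this is secured, the nuclear-trace formula follows by evaluating the kernel on the diagonal, giving
\[
\textnormal{Tr}(F)=\int_{\mathbb{R}^n}K_F(x,x)\,dx=\int_{\mathbb{R}^n}\int_{\mathbb{R}^n}e^{i\phi(x,\xi)-i2\pi x\cdot\xi}a(x,\xi)\,dx\,d\xi.
\]
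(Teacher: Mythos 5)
Your overall architecture is the right one and matches the paper's: realize $F$ as an integral operator, invoke Delgado's characterization of $r$-nuclear operators on $L^p$ of a $\sigma$-finite measure space, and pass between the kernel decomposition $K_F(x,y)=\sum_k h_k(x)g_k(y)$ and the symbol decomposition by a Fourier transform in the second variable. Your converse direction (insert \eqref{symboldecompositionT} into the definition of $F$, apply Fourier inversion to $\mathscr{F}f$, and read off the kernel) is essentially the paper's argument verbatim. However, the forward direction as you have written it has a genuine gap, and it sits exactly at the step you call the ``bridge identity.'' Under the stated hypothesis the symbol only satisfies $a(x,\cdot)\in L^1_{loc}(\mathbb{R}^n)$, so the integral $K_F(x,y)=\int e^{i\phi(x,\xi)-i2\pi y\cdot\xi}a(x,\xi)\,d\xi$ need not converge, the function $\xi\mapsto e^{i\phi(x,\xi)}a(x,\xi)$ need not lie in $L^1$ or $L^2$, and consequently neither ``$K_F(x,\cdot)$ is the Fourier transform of $e^{i\phi(x,\cdot)}a(x,\cdot)$'' nor ``by Fourier inversion'' is available. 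You locate the main obstacle in the termwise interchange of $\mathscr{F}^{-1}$ with the series (which Hausdorff--Young does handle, since $\|\mathscr{F}^{-1}g_k\|_{L^{p_1}}\le\|g_k\|_{L^{p_1'}}$), but the harder problem is recovering $a$ pointwise from the operator at all.

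The paper circumvents this by never forming $\mathscr{F}^{-1}_yK_F(x,\cdot)$ globally. Instead it tests $F$ on the functions $\mathscr{F}^{-1}\delta_{\xi_0}^{\rho}$, where $\delta_{\xi_0}^{\rho}=|B(\xi_0,\rho)|^{-1}1_{B(\xi_0,\rho)}$; the hypothesis $p_1\ge 2$ enters precisely to guarantee, via Hausdorff--Young, that these test functions lie in $L^{p_1}=\mathrm{Dom}(F)$. Computing $F(\mathscr{F}^{-1}\delta_{\xi_0}^{\rho})(x)$ from the definition gives $|B(\xi_0,\rho)|^{-1}\int_{B(\xi_0,\rho)}e^{i\phi(x,\xi)}a(x,\xi)\,d\xi$, which converges to $e^{i\phi(x,\xi_0)}a(x,\xi_0)$ for a.e.\ $\xi_0$ by the Lebesgue differentiation theorem --- this is exactly where $a(x,\cdot)\in L^1_{loc}$ is used, and it is a local statement requiring no global integrability. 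Computing the same quantity from the Delgado kernel decomposition and passing to the limit $\rho\to 0^+$ by dominated convergence (using $K\in L^1(\mathbb{R}^{2n})$ and $\|\mathscr{F}^{-1}\delta_{\xi_0}^{\rho}\|_{L^\infty}\le 1$) yields $\sum_k h_k(x)(\mathscr{F}^{-1}g_k)(\xi_0)$. Equating the two limits gives \eqref{symboldecompositionT}. To repair your argument you would either have to adopt this localization device or set up the bridge identity distributionally (interpreting $e^{i\phi(x,\cdot)}a(x,\cdot)$ as a tempered distribution and identifying it a.e.\ with the locally integrable limit), neither of which is present in your write-up. The trace formula at the end is fine once the decomposition is established.
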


The previous result is an analogue of the main  results proved in   Ghaemi, Jamalpour Birgani, and  Wong \cite{Ghaemi,Ghaemi2}, Jamalpour Birgani \cite{Majid}, and Cardona and Barraza \cite{BarrazaCardona}. Theorem \ref{MainTheorem}, can be used for understanding the properties of the corresponding symbols in Lebesgue spaces. Moreover, we obtain the following result as a consequence of   Theorem \ref{MainTheorem}.

\begin{theorem}\label{decaying}
 Let $a(\cdot,\cdot)$ be a symbol such that $a(x,\cdot)\in L^1_{loc}(\mathbb{R}^n),$ $a.e.w.,$  $x\in \mathbb{R}^n.$ Let $2\leq p_1<\infty,$   $1\leq p_2<\infty,$  and let $F$ be the Fourier integral operator associated to $a(\cdot,\cdot).$ If $F:L^{p_1}(\mathbb{R}^n)\rightarrow L^{p_2}(\mathbb{R}^n)$ is nuclear, then $a(x,\xi)\in L^{p_2}_xL^{p_1}_\xi(\mathbb{R}^n\times \mathbb{R}^n)\cap L^{p_1}_\xi L^{p_2}_x(\mathbb{R}^n\times \mathbb{R}^n) ,$ this means that
 \begin{equation}
     \Vert a(x,\xi)\Vert_{ L^{p_2}_xL^{p_1}_\xi(\mathbb{R}^n\times \mathbb{R}^n),}:=\left(\int\limits_{\mathbb{R}^n} \left(\int\limits_{\mathbb{R}^n}|a(x,\xi)|^{p_2}dx\right)^{\frac{p_1}{p_2}}d\xi\right)^{\frac{1}{p_1}}<\infty,
 \end{equation} and 
 \begin{equation}
     \Vert a(x,\xi)\Vert_{ L^{p_1}_\xi L^{p_2}_x(\mathbb{R}^n\times \mathbb{R}^n),}:=\left(\int\limits_{\mathbb{R}^n} \left(\int\limits_{\mathbb{R}^n}|a(x,\xi)|^{p_1}d\xi \right)^{\frac{p_2}{p_1}}dx\right)^{\frac{1}{p_2}}<\infty.
 \end{equation}
\end{theorem}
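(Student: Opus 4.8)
The plan is to deduce this directly from Theorem \ref{MainTheorem}. Since nuclearity is precisely the case $r=1$ in the definition \eqref{nuc}, the hypothesis that $F\colon L^{p_1}(\mathbb{R}^n)\to L^{p_2}(\mathbb{R}^n)$ is nuclear lets me invoke Theorem \ref{MainTheorem} with $r=1$. This yields sequences $\{g_k\}$ and $\{h_k\}$ with
\[
a(x,\xi)=e^{-i\phi(x,\xi)}\sum_{k=1}^{\infty}h_k(x)\,(\mathscr{F}^{-1}g_k)(\xi)
\qquad\text{and}\qquad
\sum_{k=1}^{\infty}\Vert g_k\Vert_{L^{p_1'}}\Vert h_k\Vert_{L^{p_2}}<\infty .
\]
From here the entire argument is a chain of norm estimates applied termwise to this series, so the whole proof reduces to controlling the two mixed norms by $\sum_k\Vert g_k\Vert_{L^{p_1'}}\Vert h_k\Vert_{L^{p_2}}$.

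For the norm $\Vert a\Vert_{L^{p_2}_xL^{p_1}_\xi}$ I would first discard the phase using $|e^{-i\phi(x,\xi)}|=1$, giving the pointwise bound $|a(x,\xi)|\le \sum_k |h_k(x)|\,|(\mathscr{F}^{-1}g_k)(\xi)|$. Then I integrate in $x$ and apply Minkowski's integral inequality in $L^{p_2}$ (valid since $p_2\ge 1$) to pull the sum outside the $x$-norm, obtaining
\[
\Big(\int_{\mathbb{R}^n}|a(x,\xi)|^{p_2}\,dx\Big)^{1/p_2}
\le \sum_{k=1}^{\infty}\Vert h_k\Vert_{L^{p_2}}\,|(\mathscr{F}^{-1}g_k)(\xi)| .
\]
Taking the $L^{p_1}$ norm in $\xi$ and using Minkowski once more (valid since $p_1\ge 1$) leaves me with $\sum_k \Vert h_k\Vert_{L^{p_2}}\Vert \mathscr{F}^{-1}g_k\Vert_{L^{p_1}}$. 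The last ingredient is the Hausdorff--Young inequality: because $p_1\ge 2$ we have $p_1'\le 2$, so $\Vert \mathscr{F}^{-1}g_k\Vert_{L^{p_1}}\le C\Vert g_k\Vert_{L^{p_1'}}$. Combining everything gives $\Vert a\Vert_{L^{p_2}_xL^{p_1}_\xi}\le C\sum_k \Vert g_k\Vert_{L^{p_1'}}\Vert h_k\Vert_{L^{p_2}}<\infty$.

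The second norm $\Vert a\Vert_{L^{p_1}_\xi L^{p_2}_x}$ is handled by the same three tools in the opposite order: I integrate in $\xi$ first, apply Minkowski in $L^{p_1}$ together with Hausdorff--Young to get $\big(\int|a(x,\xi)|^{p_1}d\xi\big)^{1/p_1}\le C\sum_k |h_k(x)|\,\Vert g_k\Vert_{L^{p_1'}}$, and then take the $L^{p_2}$ norm in $x$ and use Minkowski again to arrive at the same finite bound. I expect the only genuine subtlety to be the application of Hausdorff--Young, which is exactly where the hypothesis $p_1\ge 2$ is essential (so that the conjugate exponent $p_1'$ lands in the admissible range $[1,2]$); the $p_2\ge 1$ hypothesis is needed only so that the two Minkowski steps are legitimate. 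Everything else is routine interchange of summation and integration justified by the absolute convergence of the series in the relevant mixed-norm spaces.
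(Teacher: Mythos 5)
Your proposal is correct and follows essentially the same route as the paper's own proof: invoke Theorem \ref{MainTheorem} to obtain the decomposition $a(x,\xi)=e^{-i\phi(x,\xi)}\sum_k h_k(x)(\mathscr{F}^{-1}g_k)(\xi)$, discard the unimodular phase, apply Minkowski's inequality in each variable to pull the sum out of the mixed norms, and control $\Vert\mathscr{F}^{-1}g_k\Vert_{L^{p_1}}$ by $\Vert g_k\Vert_{L^{p_1'}}$ via Hausdorff--Young (using $p_1\geq 2$). Your explicit identification of where $p_1\geq 2$ and $p_2\geq 1$ enter is a slightly more careful accounting than the paper's, but the argument is the same.
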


Sufficient conditions in order that  pseudo-differential operators in $L^2(\mathbb{R}^n)$ can be extended to (trace class) nuclear operators  are well known. Let us recall that  the Weyl-quantization of a distribution $\sigma\in \mathscr{S}'(\mathbb{R}^{2n})$ is the pseudo-differential operator defined by
\begin{equation}\label{weyl}
Af(x)\equiv \sigma^\omega(x,D_x)f(x)=\int\limits_{\mathbb{R}^{n}}\int\limits_{\mathbb{R}^{n}}e^{i 2\pi (x-y)\cdot \xi}\sigma\left(\frac{x+y}{2},\xi\right)f(y)dyd\xi.
\end{equation}
As it is well known  $\sigma=\sigma_A(\cdot,\cdot)\in L^{1}(\mathbb{R}^{2n}),$  implies that $A:L^2\rightarrow L^2$ is class trace, and   $A:L^2\rightarrow L^2$ is Hilbert-Schmidt if and only if $\sigma_A\in L^2(\mathbb{R}^{2n}).$ In the framework of the Weyl-H\"ormander calculus of operators $A$ associated to symbols $\sigma$ in the $S(m,g)$-classes (see \cite{Hor2}), there exist two remarkable results. The first one, due to Lars H\"ormander, which asserts that  $\sigma_A\in S(m,g)$ and $\sigma\in L^{1}(\mathbb{R}^{2n})$  implies that $A:L^2\rightarrow L^2$ is a trace class operator. The second one, due to L. Rodino and F. Nicola expresses that  $\sigma_A \in S(m,g)$ and $m\in L^{1}_{w},$  (the weak-$L^1$ space), implies that $A:L^2\rightarrow L^2$ is Dixmier traceable \cite{Rod-Nic}. Moreover, an open conjecture by Rodino and Nicola (see \cite{Rod-Nic}) says that $\sigma_A\in L^{1}_{w}(\mathbb{R}^{2n})$ gives an operator $A$ with finite Dixmier trace. General properties for pseudo-differential operators on Schatten-von Neumann classes can be found in Buzano and Toft \cite{BuzanoToft}.

As an application of Theorem \ref{MainTheorem} to the Weyl quantization we present the following theorem.

\begin{theorem}\label{MainTheorem2}
 Let  $0<r\leq 1.$ Let $a(\cdot,\cdot)$ be a differentiable symbol. Let $2\leq p_1<\infty,$   $1\leq p_2<\infty,$  and let $a^\omega(x,D_x)$ be the Weyl quantization of the symbol $a(\cdot,\cdot).$ Then, $a^\omega(x,D_x):L^{p_1}(\mathbb{R}^n)\rightarrow L^{p_2}(\mathbb{R}^n)$ is $r$-nuclear, if and only if,  the symbol $a(\cdot,\cdot)$  admits a decomposition of the form
\begin{equation}
a(x,\xi)=\sum_{k=1}^{\infty}\int\limits_{\mathbb{R}^{n}} e^{-i2\pi z\cdot \xi } h_{k}\left(x+\frac{z}{2}\right) {g}_k\left(x-\frac{z}{2} \right) dz,\,\,\,a.e.w.,\,\,(x,\xi),
\end{equation} where   $\{g_k\}_{k\in\mathbb{N}}$ and $\{h_k\}_{k\in\mathbb{N}}$ are sequences of functions satisfying 
\begin{equation}
\sum_{k=0}^{\infty}\Vert g_k\Vert^r_{L^{p_1'}}\Vert h_{k}\Vert^r_{L^{p_2}}<\infty.
\end{equation}
\end{theorem}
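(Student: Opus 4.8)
The plan is to regard the Weyl operator $a^\omega(x,D_x)$ as a Fourier integral operator of the form \eqref{definition} with the trivial phase $\phi(x,\xi)=2\pi x\cdot\xi$, and then to push the criterion of Theorem \ref{MainTheorem} through the Weyl calculus. The bridge between the two decompositions is the Weyl correspondence at the level of the Schwartz kernel: the operator $a^\omega(x,D_x)$ has kernel
\begin{equation*}
K(x,y)=\int_{\mathbb{R}^n}e^{i2\pi(x-y)\cdot\xi}a\!\left(\tfrac{x+y}{2},\xi\right)d\xi,
\end{equation*}
and, inverting the partial Fourier transform in the midpoint/difference variables, the Weyl symbol is recovered from the kernel by
\begin{equation*}
a(x,\xi)=\int_{\mathbb{R}^n}e^{-i2\pi z\cdot\xi}K\!\left(x+\tfrac{z}{2},x-\tfrac{z}{2}\right)dz .
\end{equation*}
The point is that the decomposition \eqref{symboldecompositionT} of the reduced symbol, with $\phi(x,\xi)=2\pi x\cdot\xi$, corresponds under these identities exactly to a kernel of rank-one type, and feeding such a kernel into the second identity reproduces the asserted decomposition of $a$.

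For the sufficiency direction I would substitute the claimed decomposition of $a$ into the kernel formula and carry out the $\xi$-integration. Writing $a(\tfrac{x+y}{2},\xi)=\sum_k\int e^{-i2\pi z\cdot\xi}h_k(\tfrac{x+y+z}{2})g_k(\tfrac{x+y-z}{2})\,dz$ and integrating $e^{i2\pi(x-y-z)\cdot\xi}$ in $\xi$ collapses the $z$-variable to $z=x-y$ by Fourier inversion, so that $a^\omega(x,D_x)f=\sum_k h_k\,\langle g_k,f\rangle$. This is a nuclear representation of the form \eqref{nuc} with $e_k'=g_k\in L^{p_1'}=(L^{p_1})'$ and $y_k=h_k\in L^{p_2}$, and the hypothesis $\sum_k\|g_k\|_{L^{p_1'}}^r\|h_k\|_{L^{p_2}}^r<\infty$ is precisely the $r$-nuclearity estimate, giving $n_r(a^\omega(x,D_x))\le(\sum_k\|g_k\|_{L^{p_1'}}^r\|h_k\|_{L^{p_2}}^r)^{1/r}$.

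For the necessity direction I would invoke Theorem \ref{MainTheorem}: realizing $a^\omega(x,D_x)$ as the operator \eqref{definition} with phase $\phi(x,\xi)=2\pi x\cdot\xi$ and reduced (Kohn--Nirenberg) symbol $b$, the $r$-nuclearity forces $b(x,\xi)=e^{-i2\pi x\cdot\xi}\sum_k h_k(x)(\mathscr{F}^{-1}g_k)(\xi)$ with the stated summability, whence $a^\omega(x,D_x)f=\sum_k h_k\langle g_k,f\rangle$ and the kernel is $K(x,y)=\sum_k h_k(x)g_k(y)$. Plugging this into the kernel-to-symbol identity above yields $a(x,\xi)=\sum_k\int e^{-i2\pi z\cdot\xi}h_k(x+\tfrac z2)g_k(x-\tfrac z2)\,dz$, which is the desired decomposition.

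The main obstacle I anticipate is the rigorous passage between the Weyl symbol $a$ and the reduced symbol $b$ of the associated operator \eqref{definition}: one must verify that, under the standing hypothesis that $a$ is differentiable, the Kohn--Nirenberg symbol $b$ exists and satisfies the local integrability requirement $b(x,\cdot)\in L^1_{loc}(\mathbb{R}^n)$ demanded by Theorem \ref{MainTheorem}, and that the two symbol decompositions correspond in an exact, invertible fashion. Everything else reduces to justifying the interchange of summation and integration through the summability hypothesis, together with the Fourier inversion and multiplication identities $\int(\mathscr{F}^{-1}g_k)(\xi)(\mathscr{F}f)(\xi)\,d\xi=\int g_k(y)f(y)\,dy$ in the $L^{p_1}$--$L^{p_1'}$ duality, which become routine once the kernel computation above is in place.
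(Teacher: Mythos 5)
Your proposal is correct and follows essentially the same route as the paper: reduce $a^\omega(x,D_x)$ to an operator of the form \eqref{definition} with phase $2\pi x\cdot\xi$, apply Theorem \ref{MainTheorem} (equivalently Delgado's Theorem \ref{Theorem1}) to get the rank-one kernel $K(x,y)=\sum_k h_k(x)g_k(y)$, and convert between this and the Weyl symbol via the midpoint/difference change of variables. The only cosmetic difference is that the paper phrases the conversion through the $\tau$-quantization symbol relation of Proposition \ref{relationbquant} (proving the general $\tau$ case and setting $\tau=1/2$), whereas you phrase it directly at the level of the Schwartz kernel; the computations are identical.
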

\begin{remark}
Let us recall that the Wigner transform of two complex functions $h,g$ on $\mathbb{R}^n,$ is formally defined as
\begin{equation}
\mathscr{W}(h,g)(x,\xi):=\int\limits_{\mathbb{R}^{n}} e^{-i2\pi z\cdot \xi } h\left(x+\frac{z}{2}\right) \overline{g}\left(x-\frac{z}{2} \right) dz,\,\,\,a.e.w.,\,\,(x,\xi).
\end{equation} With a such definition in mind, if $2\leq p_1<\infty,$ $1\leq p_2<\infty,$ under the hypothesis of Theorem \ref{MainTheorem2}, $a^\omega(x,D_x):L^{p_1}(\mathbb{R}^n)\rightarrow L^{p_2}(\mathbb{R}^n),$ is $r$-nuclear, if and only if,  the symbol $a(\cdot,\cdot)$  admits a decomposition (defined trough of the {  \textit{Wigner transform}}) of the type
\begin{equation}
a(x,\xi)=\sum_{k=1}^{\infty}\mathscr{W}(h_k,\overline{g}_k)(x,\xi),\,\,\,a.e.w.,\,\,(x,\xi),
\end{equation} where   $\{g_k\}_{k\in\mathbb{N}}$ and $\{h_k\}_{k\in\mathbb{N}}$ are sequences of functions satisfying 
\begin{equation}
\sum_{k=0}^{\infty}\Vert g_k\Vert^r_{L^{p_1'}}\Vert h_{k}\Vert^r_{L^{p_2}}<\infty.
\end{equation}
\end{remark}
 The proof of our main  result (Theorem \ref{MainTheorem}) will be presented in Section \ref{Rnn} as well as the proof of Theorem \ref{MainTheorem2}. The nuclearity of Fourier integral operators on the lattice  $\mathbb{Z}^n$ and on  compact Lie groups  will be discussed in Section \ref{ZnGLie} as well as some trace formulae for FIOs on the -dimensional torus $\mathbb{T}^n=\mathbb{R}^n/\mathbb{Z}^n$ and the unitary special group $\textnormal{SU}(2)$. Finally, in Section \ref{FIOcompacthomomani} we consider the nuclearity of FIOs on arbitrary compact homogeneous manifolds and we discuss the case of the complex projective space $\mathbb{C}\mathbb{P}^2$. In this setting, we will prove analogues for the theorems  \ref{MainTheorem} and  \ref{decaying} in every context mentioned above.

\section{Symbol criteria for nuclear Fourier integral operators}\label{Rnn}
\subsection{Characterization of nuclear FIOs}

In this section we prove our main result for Fourier integral operators $F$ defined as in \eqref{definition}. Our criteria will be formulated in terms of the symbols $a.$ First, let us observe that every FIO $F$ has a integral representation with kernel  $K(x,y).$ In fact, straightforward computation shows us that
\begin{equation}\label{kernelpseudo}
Ff(x) :=\int_{\mathbb{R}^n}K(x,y)f(y)dy,\,\,\,\,\,\,\,\,\,\,\,\,\,\,\,\,\,\,\,
\end{equation} where $$ K(x,y):=\int_{\mathbb{R}^n}e^{i\phi(x,\xi)-i2\pi y\cdot \xi}a(x,\xi)d\xi, $$
for every  $f\in\mathscr{D}(\mathbb{R}^n).$ In order to analyze the $r$-nuclearity of the Fourier integral operator  $F$ we will studying  its kernel $K,$ by using as a fundamental tool, the following theorem (see J. Delgado \cite{Delgado,D2}).
\begin{theorem}\label{Theorem1} Let us consider $1\leq p_1,p_2<\infty,$ $0<r\leq 1$ and let $p_i'$ be such that $\frac{1}{p_i}+\frac{1}{p_i'}=1.$ Let $(X_1,\mu_1)$ and $(X_2,\mu_2)$ be $\sigma$-finite measure spaces. An operator $T:L^{p_1}(X_1,\mu_1)\rightarrow L^{p_2}(X_2,\mu_2)$ is $r$-nuclear if and only if there exist sequences $(h_k)_k$ in $L^{p_2}(\mu_2),$ and $(g_k)$ in $L^{p_1'}(\mu_1),$ such that
\begin{equation}
\sum_{k}\Vert h_k\Vert_{L^{p_2}}^r\Vert g_k\Vert^r_{L^{p_1'} } <\infty,\textnormal{        and        }Tf(x)=\int\limits_{X_1}(\sum_k h_{k}(x)g_k(y))f(y)d\mu_1(y),\textnormal{   a.e.w. }x,
\end{equation}
for every $f\in {L^{p_1}}(\mu_1).$ In this case, if $p_1=p_2,$ and $\mu_1=\mu_2,$ $($\textnormal{see Section 3 of} \cite{Delgado}$)$ the nuclear trace of $T$ is given by
\begin{equation}\label{trace1}
\textnormal{Tr}(T):=\int\limits_{X_1}\sum_{k}g_{k}(x)h_{k}(x)d\mu_1(x).
\end{equation}
\end{theorem}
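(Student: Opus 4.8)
The plan is to prove Theorem \ref{Theorem1} by unpacking the definition of $r$-nuclearity \eqref{nuc} directly, the only external input being the Riesz representation of the dual space. Since $(X_1,\mu_1)$ is $\sigma$-finite and $1\leq p_1<\infty$, the dual of $E=L^{p_1}(X_1,\mu_1)$ is isometrically $L^{p_1'}(X_1,\mu_1)$ (it is precisely the endpoint $p_1=1$ that forces the $\sigma$-finiteness hypothesis, in order to identify $(L^1)'$ with $L^\infty$). Thus every $r$-nuclear representation of $T$ is, after this identification, a representation by sequences in $L^{p_1'}(\mu_1)$ and $L^{p_2}(\mu_2)$, and the whole statement becomes a matter of legitimately moving a sum through an integral.

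For the forward direction, assume $T:L^{p_1}(\mu_1)\to L^{p_2}(\mu_2)$ is $r$-nuclear, so $Tf=\sum_k e_k'(f)\,y_k$ with $\sum_k\|e_k'\|^r\|y_k\|^r<\infty$. I would write $e_k'(f)=\int_{X_1}g_k f\,d\mu_1$ with $g_k\in L^{p_1'}(\mu_1)$ and $\|e_k'\|=\|g_k\|_{L^{p_1'}}$, and set $h_k:=y_k\in L^{p_2}$, so the summability condition reads $\sum_k\|h_k\|_{L^{p_2}}^r\|g_k\|_{L^{p_1'}}^r<\infty$. The key step is the interchange of $\sum_k$ and $\int_{X_1}$. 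Since $0<r\leq1$ gives the nesting $\ell^r\hookrightarrow\ell^1$, one first obtains
\begin{equation}
\sum_k\|h_k\|_{L^{p_2}}\|g_k\|_{L^{p_1'}}\leq\Big(\sum_k\|h_k\|_{L^{p_2}}^r\|g_k\|_{L^{p_1'}}^r\Big)^{1/r}<\infty.
\end{equation}
Setting $\Phi(x):=\sum_k|h_k(x)|\,\|g_k\|_{L^{p_1'}}$, the triangle inequality in $L^{p_2}$ (valid as $p_2\geq1$) yields $\|\Phi\|_{L^{p_2}}\leq\sum_k\|h_k\|_{L^{p_2}}\|g_k\|_{L^{p_1'}}<\infty$, so $\Phi\in L^{p_2}$ and hence $\Phi(x)<\infty$ for a.e.\ $x$. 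For such $x$ and any $f\in L^{p_1}$, H\"older's inequality gives $\sum_k|h_k(x)|\int_{X_1}|g_k f|\,d\mu_1\leq\Phi(x)\|f\|_{L^{p_1}}<\infty$, so Fubini's theorem for the product of counting measure and $\mu_1$ applies and produces the kernel representation $Tf(x)=\int_{X_1}\big(\sum_k h_k(x)g_k(y)\big)f(y)\,d\mu_1(y)$ a.e.\ $x$.

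The converse is then the same computation read backwards: given sequences with $\sum_k\|h_k\|_{L^{p_2}}^r\|g_k\|_{L^{p_1'}}^r<\infty$ and the stated kernel formula, I would define $e_k'(f):=\int_{X_1}g_k f\,d\mu_1$ (so $\|e_k'\|=\|g_k\|_{L^{p_1'}}$) and $y_k:=h_k$; the finiteness of $\Phi$ again justifies rewriting the kernel action as $Tf=\sum_k e_k'(f)y_k$, exhibiting the $r$-nuclear representation \eqref{nuc}. For the trace, in the case $p_1=p_2=:p$, $\mu_1=\mu_2=:\mu$, the space $L^p(X,\mu)$ has the approximation property, so the nuclear trace $\textnormal{Tr}(T)=\sum_k e_k'(y_k)$ is independent of the representation; here $e_k'(y_k)=\int_X g_k h_k\,d\mu$, and since $\sum_k\int_X|g_k h_k|\,d\mu\leq\sum_k\|g_k\|_{L^{p'}}\|h_k\|_{L^{p}}<\infty$ by H\"older, one more application of Fubini delivers $\textnormal{Tr}(T)=\int_X\sum_k g_k(x)h_k(x)\,d\mu(x)$, which is \eqref{trace1}.

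The main obstacle is not conceptual but lies in the two interchanges of summation and integration: the crux is the auxiliary function $\Phi$, whose membership in $L^{p_2}$ (obtained from $\ell^r\hookrightarrow\ell^1$ together with the triangle inequality) is what guarantees a.e.\ finiteness and thereby licenses Fubini. The only genuinely nontrivial external fact needed is that the nuclear trace is well defined on $L^p$ spaces, i.e.\ that $L^p$ enjoys the approximation property; this is what makes formula \eqref{trace1} meaningful and is the reason the trace assertion is restricted to the case $E=F$.
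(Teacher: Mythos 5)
Your proof is correct. Note, however, that the paper does not prove Theorem \ref{Theorem1} at all: it is imported as a black box from Delgado \cite{Delgado,D2}, so there is no internal proof to compare against. Your argument — identifying $(L^{p_1})'$ with $L^{p_1'}$ via $\sigma$-finiteness, using $\ell^r\hookrightarrow\ell^1$ together with the auxiliary majorant $\Phi$ to license the interchange of sum and integral in both directions, and invoking the approximation property of $L^p$ for the well-definedness of the trace — is essentially the standard proof from Delgado's papers. The one step worth making explicit is that the a.e.-pointwise sum $\sum_k e_k'(f)y_k(x)$ agrees a.e.\ with the $L^{p_2}$-norm limit $Tf$; this follows from the a.e.\ absolute convergence you already established via $\Phi$ (a subsequence of the partial sums converges a.e.\ to the norm limit, and the full sequence converges a.e.\ by domination).
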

\begin{remark}\label{remarkmain} Given $f\in L^1(\mathbb{R}^n),$ define its Fourier transform by
\begin{equation}\mathscr{F}f(\xi):=\int_{\mathbb{R}^n}e^{-i2\pi x\cdot \xi}{f}(x)dx.
\end{equation} If we consider a function $f,$ such that $f\in L^1(\mathbb{R}^n)$ with $\mathscr{F}f\in L^1(\mathbb{R}^n),$ the Fourier inversion formula gives
\begin{equation}{f}(x)=\int_{\mathbb{R}^n}e^{i2\pi x\cdot \xi}\mathscr{F}f(\xi)d\xi.
\end{equation} Moreover, the Hausdorff-Young inequality $\Vert \mathscr{F}f \Vert_{L^{p'}}\leq \Vert f \Vert_{L^{p}}$ (with $\Vert \mathscr{F}f \Vert_{L^2}=\Vert f \Vert_{L^2}$)
shows that the Fourier transform is a well defined operator on $L^p,$ $1<p\leq2$.
\end{remark}
Now, we prove our main theorem.
\begin{theorem}
 Let  $0<r\leq 1.$ Let $a(\cdot,\cdot)$ be a symbol such that $a(x,\cdot)\in L^1_{loc}(\mathbb{R}^n),$ $a.e.w.,$  $x\in \mathbb{R}^n.$ Let $2\leq p_1<\infty,$   $1\leq p_2<\infty,$  and let $F$ be the Fourier integral operator associated to $a(\cdot,\cdot).$ Then, $F:L^{p_1}(\mathbb{R}^n)\rightarrow L^{p_2}(\mathbb{R}^n)$ is $r$-nuclear, if and only if,  the symbol $a(\cdot,\cdot)$  admits a decomposition of the form
\begin{equation}\label{symboldecompositionT}
a(x,\xi)=e^{-i\phi(x,\xi)}\sum_{k=1}^{\infty}h_{k}(x)(\mathscr{F}^{-1}{g}_k)(\xi),\,\,\,a.e.w.,\,\,(x,\xi),
\end{equation} where   $\{g_k\}_{k\in\mathbb{N}}$ and $\{h_k\}_{k\in\mathbb{N}}$ are sequences of functions satisfying 
\begin{equation}
\sum_{k=0}^{\infty}\Vert g_k\Vert^r_{L^{p_1'}}\Vert h_{k}\Vert^r_{L^{p_2}}<\infty.
\end{equation}
\end{theorem}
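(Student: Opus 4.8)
The plan is to reduce the statement to Delgado's characterization (Theorem \ref{Theorem1}) by relating the integral kernel $K(x,y)$ of $F$ to a Fourier transform of the symbol in the frequency variable. First I would record, from \eqref{kernelpseudo}, that $F$ has kernel
\begin{equation*}
K(x,y)=\int_{\mathbb{R}^n}e^{i\phi(x,\xi)-i2\pi y\cdot\xi}a(x,\xi)\,d\xi=\mathscr{F}_\xi\big[e^{i\phi(x,\cdot)}a(x,\cdot)\big](y),
\end{equation*}
where the inner function $b(x,\xi):=e^{i\phi(x,\xi)}a(x,\xi)$ is, for a.e.\ $x$, locally integrable in $\xi$ by the hypothesis $a(x,\cdot)\in L^1_{loc}(\mathbb{R}^n)$. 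Delgado's Theorem \ref{Theorem1} asserts that $F:L^{p_1}\to L^{p_2}$ is $r$-nuclear if and only if its kernel factorizes as $K(x,y)=\sum_k h_k(x)g_k(y)$ with $(h_k)\subset L^{p_2}$, $(g_k)\subset L^{p_1'}$ and $\sum_k\|h_k\|_{L^{p_2}}^r\|g_k\|_{L^{p_1'}}^r<\infty$. Thus the theorem amounts to translating this kernel factorization into the claimed symbol factorization through the Fourier transform.

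For the sufficiency direction I would start from the assumed decomposition \eqref{symboldecompositionT}, so that $b(x,\xi)=\sum_k h_k(x)(\mathscr{F}^{-1}g_k)(\xi)$, insert it into the kernel formula, and interchange the summation with the $\xi$-integration. Since $\mathscr{F}\mathscr{F}^{-1}=\mathrm{Id}$, each term collapses to $h_k(x)g_k(y)$, giving $K(x,y)=\sum_k h_k(x)g_k(y)$; the summability hypothesis is exactly the one required by Theorem \ref{Theorem1}, so $F$ is $r$-nuclear. Conversely, assuming $F$ is $r$-nuclear, Theorem \ref{Theorem1} supplies the factorization $K(x,y)=\sum_k h_k(x)g_k(y)$ with the stated summability; applying the inverse Fourier transform in $y$ to both sides and using $b(x,\cdot)=\mathscr{F}^{-1}_{y}[K(x,\cdot)]$ yields $b(x,\xi)=\sum_k h_k(x)(\mathscr{F}^{-1}g_k)(\xi)$, whence $a(x,\xi)=e^{-i\phi(x,\xi)}\sum_k h_k(x)(\mathscr{F}^{-1}g_k)(\xi)$, which is \eqref{symboldecompositionT}.

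The step I expect to be the main obstacle is the rigorous handling of the Fourier transforms and the interchange of sum and integral. This is precisely where the hypothesis $2\le p_1<\infty$ is essential: it gives $1\le p_1'\le 2$, so that for each $g_k\in L^{p_1'}$ the Hausdorff--Young inequality recalled in Remark \ref{remarkmain} guarantees that $\mathscr{F}^{-1}g_k$ is a well-defined element of $L^{p_1}$, making the right-hand side of \eqref{symboldecompositionT} meaningful. I would justify the interchange of summation and integration by dominated convergence, controlling the tails through the finite quantity $\sum_k\|h_k\|_{L^{p_2}}^r\|g_k\|_{L^{p_1'}}^r$ together with the local integrability of $b(x,\cdot)$, and I would note that all identities are asserted almost everywhere in $(x,\xi)$, as in the statement. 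Once these analytic points are settled, both implications follow from the algebraic identity $\mathscr{F}\mathscr{F}^{-1}=\mathrm{Id}$ applied termwise, so that $r$-nuclearity of $F$ and the symbol decomposition \eqref{symboldecompositionT} are genuinely equivalent.
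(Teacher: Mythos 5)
Your overall strategy coincides with the paper's: both directions are reduced to Delgado's characterization (Theorem \ref{Theorem1}), the sufficiency direction is handled exactly as in the paper (insert the decomposition, interchange sum and integral, use Fourier inversion to produce the kernel factorization), and you correctly identify that $2\le p_1$ enters through the Hausdorff--Young inequality. The sufficiency half of your proposal is therefore fine.

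The necessity direction, however, contains a genuine gap precisely at the step you flag as the ``main obstacle.'' You propose to write $K(x,y)=\mathscr{F}_\xi\bigl[e^{i\phi(x,\cdot)}a(x,\cdot)\bigr](y)$ and then recover $b(x,\cdot)=e^{i\phi(x,\cdot)}a(x,\cdot)$ by ``applying the inverse Fourier transform in $y$ to both sides.'' Under the standing hypothesis the only regularity available is $a(x,\cdot)\in L^1_{loc}(\mathbb{R}^n)$, so the integral defining $K(x,y)$ need not converge absolutely and $K(x,\cdot)$ is not the classical Fourier transform of $b(x,\cdot)$; the identity $b(x,\cdot)=\mathscr{F}^{-1}_y[K(x,\cdot)]$ is exactly what has to be proved, and neither classical Fourier inversion (which would need $b(x,\cdot)\in L^1$) nor Hausdorff--Young applies to $b(x,\cdot)$ itself. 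The paper circumvents this by never inverting the kernel: it evaluates $F$ on the family $\mathscr{F}^{-1}\delta_{\xi_0}^r$ with $\delta_{\xi_0}^r=|B(\xi_0,r)|^{-1}1_{B(\xi_0,r)}$ (admissible in $L^{p_1}$ by Hausdorff--Young since $p_1\ge 2$), computes the limit $r\to 0^+$ once from the definition of $F$ via the Lebesgue differentiation theorem --- this is where $a(x,\cdot)\in L^1_{loc}$ is actually used, yielding $e^{i\phi(x,\xi_0)}a(x,\xi_0)$ --- and once from the nuclear kernel representation via dominated convergence and the fact that $\sum_k h_k(x)g_k(y)\in L^1(\mathbb{R}^{2n})$, yielding $\sum_k h_k(x)(\mathscr{F}^{-1}g_k)(\xi_0)$. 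Equating the two limits gives \eqref{symboldecompositionT}. Your dominated-convergence remark addresses the interchange of sum and integral but not the recovery of $a$ from $K$; to close the argument you would need to replace the ``invert the Fourier transform on the kernel'' step by an approximate-identity argument of this kind (or add hypotheses strong enough to make $b(x,\cdot)$ classically invertible).
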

\begin{proof} Let us assume that $F$ is a Fourier integral operator as in \eqref{definition} with associated symbol $a$.  Let us assume that $F:L^{p_1}(\mathbb{R}^n)\rightarrow L^{p_2}(\mathbb{R}^n)$ is $r$-nuclear. Then there exist sequences $h_{k}$ in $L^{p_2}$ and $g_k$ in $L^{p_1'}$ satisfying
\begin{equation}
Ff(x)=\int_{\mathbb{R}^n}\left(\sum_{k=1}^{\infty}h_k(x)g_{k}(y)\right)f(y)dy,\,\, f\in L^{p_1},
\end{equation} with 
\begin{equation}\label{deco2}
\sum_{k=0}^{\infty}\Vert g_k\Vert^r_{L^{p_1'}}\Vert h_{k}\Vert^r_{L^{p_2}}<\infty.
\end{equation}

Then there exist sequences $h_{k}$ in $L^{p_2}$ and $g_k$ in $L^{p_1'}$ satisfying
\begin{equation}
Ff(x)=\int_{\mathbb{R}^n}\left(\sum_{k=1}^{\infty}h_k(x)g_{k}(y)\right)f(y)dy,\,\, f\in L^{p_1},
\end{equation} with 
\begin{equation}\label{deco2}
\sum_{k=0}^{\infty}\Vert g_k\Vert^r_{L^{p_1'}}\Vert h_{k}\Vert^r_{L^{p_2}}<\infty.
\end{equation}

For all $z\in\mathbb{R}^n,$ let us consider the set  $B(z,r)$, i.e., the euclidean  ball centered at $z$ with radius $r>0.$ Let us denote by $|B(z,r)|$ the Lebesgue measure of $B(z,r).$ Let us choose $\xi_{0}\in \mathbb{R}^n$ and $r>0.$  If we define $\delta_{\xi_0}^{r}:=|B(\xi_0,r)|^{-1}\cdot 1_{B(\xi_0,r)},$ where $ 1_{B(\xi,r)}$ is the characteristic function of the ball $B(\xi_0,r),$ the condition $2\leq p_1<\infty,$ together with the Hausdorff-Young inequality gives
\begin{equation}
\Vert \mathscr{F}^{-1}(\delta_{\xi_0}^r)\Vert_{L^{p_1}}= \Vert \mathscr{F}^{-1}(\delta_{\xi_0}^r)\Vert_{L^{(p_1')'}} \leq \Vert \delta_{\xi_0}^r \Vert_{L^{p_1'}}=1.
\end{equation}
So, for every $r>0$ and $\xi_0\in\mathbb{R}^n,$ the function $\mathscr{F}^{-1}\delta_{\xi_0}^r\in L^{p_1}(\mathbb{R}^n)=\textnormal{Dom}(F), $  and we get,
\begin{align*}
F(\mathscr{F}^{-1}\delta_{\xi_0}^r)(x) &=\int_{\mathbb{R}^n}\left(\sum_{k=1}^{\infty}h_k(x)g_k(y)\right) \mathscr{F}^{-1}\delta_{\xi_0}^r(y) dy.
\end{align*}
Taking into account that $K(x,y)= \sum_{k=1}^{\infty}h_k(x)g_k(y)\in L^{1}(\mathbb{R}^{2n})$ (see, e.g.,  Lemma 3.1 of \cite{DRboundedvariable}), that $\Vert \mathscr{F}^{-1}\delta_{\xi_0}^r\Vert_{L^\infty}\leq \Vert \delta_{\xi_0}^r \Vert_{L^1}=1,$ and that (in view of the Lebesgue Differentiation Theorem)
\begin{equation}
\lim_{r\rightarrow 0^+}\mathscr{F}^{-1}\delta_{\xi_0}^r(x)=\lim_{r\rightarrow 0^+}\frac{1}{|B(\xi_0,r)|}\int_{B(\xi_0,r)}e^{i2\pi x\cdot\xi}d\xi=e^{i2\pi x\cdot\xi_0},
\end{equation}
an application of the Dominated Convergence Theorem gives
\begin{equation}
\lim_{r\rightarrow 0^+}F(\mathscr{F}^{-1}\delta_{\xi_0} ^r)(x)=\int_{\mathbb{R}^n}\left(\sum_{k=1}^{\infty}h_k(x)g_k(y)\right) e^{i2\pi y\cdot \xi_0} dy=\sum_{k=1}^{\infty}h_k(x)(\mathscr{F}^{-1}g_k)(\xi_0).
\end{equation}
In fact, for $a.e.w.$ $x\in \mathbb{R}^n,$
\begin{align*}
    \left|\left(\sum_{k=1}^{\infty}h_k(x)g_k(y)\right)(\mathscr{F}^{-1}\delta_{\xi_0} ^r)(y)\right| &=  \left|\left(\sum_{k=1}^{\infty}h_k(x)g_k(y)\right)\frac{1}{|B(\xi_0,r)|}\int_{B(\xi_0,r)}e^{i2\pi y\cdot\xi}d\xi \right|\\
    &\leq \left|\sum_{k=1}^{\infty}h_k(x)g_k(y) \right|=|K(x,y)|.
\end{align*}  Because, $K\in L^1(\mathbb{R}^{2n}),$ and the function $\kappa(x,y):=|K(x,y)|$ is non-negative  on the product space $\mathbb{R}^{2n},$ by the Fubinni theorem applied to  positive functions, the $L^1(\mathbb{R}^{2n})$-norm of $K$ can be computed from iterated integrals as
\begin{equation}
\int\int|K(x,y)|dy,dx=\int\left(\int  |K(x,y)|dy\right)dx=\int\left(\int  |K(x,y)|dx\right)dy.
\end{equation} By  Tonelly theorem, for $a.e.w.$ $x\in\mathbb{R}^n,$ the function $\kappa(x,\cdot)=|K(x,\cdot)|\in L^1(\mathbb{R}^n).$ Now, by the dominated convergence theorem, we have
\begin{align*}
   & \lim_{r\rightarrow 0^+}F(\mathscr{F}^{-1}\delta_{\xi_0} ^r)(x)\\
   &=\lim_{r\rightarrow 0^+}\int_{\mathbb{R}^n}K(x,y)\mathscr{F}^{-1}\delta_{\xi_0} ^r(y)dy= \int_{\mathbb{R}^n}K(x,y) \lim_{r\rightarrow 0^{+}}\mathscr{F}^{-1}\delta_{\xi_0} ^r(y)dy\\
    &= \int_{\mathbb{R}^n}K(x,y) e^{i2\pi y\xi_0}dy=\lim_{\ell\rightarrow \infty} \int_{|y|\leq \ell}K(x,y)\cdot e^{i2\pi y\xi_0} dy\\
    &=\lim_{\ell\rightarrow \infty} \int_{\mathbb{R}^n  }\left(\sum_{k=1}^{\infty}h_k(x)g_k(y)\right)\cdot e^{i2\pi y\xi_0} \cdot 1_{\{ |y|\leq \ell \} }  \cdot dy
    .
\end{align*} Now, from Lemma 3.4-$(d)$ in  \cite{DRboundedvariable},
\begin{align*}
   \lim_{\ell\rightarrow \infty} &\int_{\mathbb{R}^n  }\left(\sum_{k=1}^{\infty}h_k(x)g_k(y)\right)\cdot e^{i2\pi y\xi_0} \cdot 1_{\{ |y|\leq \ell \} }  \cdot dy\\
   &=\lim_{\ell,m\rightarrow \infty} \int_{\mathbb{R}^n  }\left(\sum_{k=1}^{m}h_k(x)g_k(y)\right)\cdot e^{i2\pi y\xi_0} \cdot 1_{\{ |y|\leq \ell \} }  \cdot dy \\
   &=\lim_{\ell,m\rightarrow \infty}\sum_{k=1}^{m}   h_k(x)\int_{\mathbb{R}^n  }g_k(y)\cdot e^{i2\pi y\xi_0} \cdot 1_{\{ |y|\leq \ell \} }  \cdot dy \\
   &=\lim_{\ell,m\rightarrow \infty}\sum_{k=1}^{m}   h_k(x)\int_{|y|\leq \ell }g_k(y)\cdot e^{i2\pi y\xi_0} dy\\
   &=\sum_{k=1}^{\infty}h_k(x)(\mathscr{F}^{-1}g_k)(\xi_0).
\end{align*}

On the other hand, if we compute $F(\mathscr{F}^{-1}\delta_{\xi_0}^r)$  from the definition \eqref{definition}, we have
\begin{align*}
F(\mathscr{F}^{-1}\delta_{\xi_0}^r)(x) =\frac{1}{|B(\xi_0,r)|}\int_{B(\xi_0,r)}e^{i\phi(x,\xi)}a(x,\xi)d\xi.
\end{align*} From the hypothesis that $a(x,\cdot)\in L^{1}_{loc}(\mathbb{R}^n)$ for $a.e.w$ $x\in \mathbb{R}^n,$ the Lebesgue Differentiation theorem gives 
\begin{equation}
\lim_{r\rightarrow 0^+}  F(\mathscr{F}^{-1}\delta_{\xi_0}^r)=e^{i\phi(x,\xi_0)}a(x,\xi_0).
\end{equation}  Consequently, we deduce the identity
\begin{equation}\label{deco}
e^{i\phi(x,\xi_0)}a(x,\xi_0)=\sum_{k=1}^{\infty}h_k(x)(\mathscr{F}^{-1}{g}_{k})(\xi_0),
\end{equation} which in turn is equivalent to
\begin{equation}\label{deco}
a(x,\xi_0)=e^{-i\phi(x,\xi_0)}\sum_{k=1}^{\infty}h_k(x)(\mathscr{F}^{-1}{g}_{k})(\xi_0).
\end{equation}
So, we have proved the first part of the theorem. Now, if we assume that the symbol $a$ of the FIO $F$ satisfies the decomposition formula \eqref{deco} for fixed sequences  $h_{k}$ in $L^{p_2}$ and $g_k$ in $L^{p_1'}$ satisfying \eqref{deco2}, then from \eqref{definition} we can write
\begin{align*}
Ff(x) &=\int_{\mathbb{R}^n}e^{\phi(x,\xi)}a(x,\xi)\mathscr{F}f(\xi)d\xi=\int_{\mathbb{R}^n}\sum_{k=1}^{\infty}h_k(x)(\mathscr{F}^{-1}
{g}_{k})(\xi)(\mathscr{F}{f})(\xi)d\xi\\
&=\int_{\mathbb{R}^n}\sum_{k=1}^{\infty}h_k(x)\int_{\mathbb{R}^n}e^{i2\pi y\xi} g_k(y)dy(\mathscr{F}{f})(\xi)d\xi\\
&= \int_{\mathbb{R}^n}\left(\sum_{k=1}^{\infty}h_k(x)g_k(y)\right)\left(\int_{\mathbb{R}^n}e^{i2\pi y\xi} (\mathscr{F}{f})(\xi)d\xi \right)dy\\
&= \int_{\mathbb{R}^n}\left(\sum_{k=1}^{\infty}h_k(x)g_k(y)\right)f(y)dy,
\end{align*}
where in the last line we have used the Fourier inversion formula. So, by  Delgado Theorem (Theorem \ref{Theorem1}) we finish the proof.
\end{proof}

\begin{proof}[Proof of Theorem \ref{decaying}] Let $a(\cdot,\cdot)$ be a symbol such that $a(x,\cdot)\in L^1_{loc}(\mathbb{R}^n),$ $a.e.w.,$  $x\in \mathbb{R}^n.$ Let $2\leq p_1<\infty,$   $1\leq p_2<\infty,$  and let $F$ be the Fourier integral operator associated to $a(\cdot,\cdot).$ If $F:L^{p_1}(\mathbb{R}^n)\rightarrow L^{p_2}(\mathbb{R}^n)$ is nuclear, then Theorem \ref{MainTheorem} guarantees the decomposition
$$a(x,\xi)=e^{-i\phi(x,\xi)}\sum_{k=1}^{\infty}h_{k}(x)(\mathscr{F}^{-1}{g}_k)(\xi),\,\,\,a.e.w.,\,\,(x,\xi),
$$ where   $\{g_k\}_{k\in\mathbb{N}}$ and $\{h_k\}_{k\in\mathbb{N}}$ are sequences of functions satisfying 
\begin{equation}
\sum_{k=0}^{\infty}\Vert g_k\Vert_{L^{p_1'}}\Vert h_{k}\Vert_{L^{p_2}}<\infty.
\end{equation} So, if we take the $L^{p_2}_x$-norm, we have,
\begin{align*}
\Vert a(x,\xi) \Vert_{L^{p_2}_x} &= \left\Vert e^{-i\phi(x,\xi)}\sum_{k=1}^{\infty}h_{k}(x)(\mathscr{F}^{-1}{g}_k)(\xi)  \right\Vert_{L^{p_2}_x}\\
&=\left\Vert \sum_{k=1}^{\infty}h_{k}(x)(\mathscr{F}^{-1}{g}_k)(\xi)  \right\Vert_{L^{p_2}_x}\\
&\leq \sum_{k=1}^{\infty}\Vert h_{k}\Vert_{L^{p_2}}|(\mathscr{F}^{-1}{g}_k)(\xi)|.
\end{align*} Now, if we use the Hausdorff-Young inequality, we deduce, $\Vert  \mathscr{F}^{-1}{g}_k\Vert_{L^{p_1}}\leq \Vert  {g}_k\Vert_{L^{p_1'}}.$
Consequently,
\begin{align*}
    \Vert a(x,\xi)\Vert_{ L^{p_2}_xL^{p_1}_\xi(\mathbb{R}^n\times \mathbb{R}^n),} &=\left(\int\limits_{\mathbb{R}^n} \left(\int\limits_{\mathbb{R}^n}|a(x,\xi)|^{p_2}dx\right)^{\frac{p_1}{p_2}}d\xi\right)^{\frac{1}{p_1}}\\
    &\leq\left\Vert  \sum_{k=1}^{\infty}\Vert h_{k}\Vert_{L^{p_2}}|(\mathscr{F}^{-1}{g}_k)(\xi)| \right\Vert_{L^{p_1}_\xi} \\
    &\leq \sum_{k=1}^{\infty}\Vert h_{k}\Vert_{L^{p_2}}\Vert\mathscr{F}^{-1}{g}_k\Vert_{L^{p_1}}\\
     &\leq \sum_{k=1}^{\infty}\Vert h_{k}\Vert_{L^{p_2}}\Vert{g}_k\Vert_{L^{p_1'}}<\infty.
\end{align*}
In an analogous way we can prove that
  $$ \Vert a(x,\xi)\Vert_{ L^{p_1}_\xi L^{p_2}_x(\mathbb{R}^n\times \mathbb{R}^n)}\leq \sum_{k=1}^{\infty}\Vert h_{k}\Vert_{L^{p_2}}\Vert{g}_k\Vert_{L^{p_1'}}<\infty. $$
Thus, we finish the proof.
\end{proof}

\subsection{The nuclear trace for FIOs on $\mathbb{R}^n$} If we choose a $r$-nuclear  operator  $T:E\rightarrow E$, $0<r\leq 1,$ with the Banach space $E$ satisfying the Grothendieck approximation property (see Grothendieck\cite{GRO}),  then 
there exist (a nuclear decomposition)  sequences $(e_n ')_{n\in\mathbb{N}_0}$ in $ E'$ (the dual space of $E$) and $(y_n)_{n\in\mathbb{N}_0}$ in $E$ satisfying 
\begin{equation}\label{nuc2}
Tf=\sum_{n\in\mathbb{N}_0} e_n'(f)y_n,\,\,\,\,\,f\in E,
\end{equation}
and \begin{equation}
\sum_{n\in\mathbb{N}_0} \Vert e_n' \Vert^r_{E'}\Vert y_n \Vert^r_{F}<\infty.
\end{equation}
In this case the nuclear trace of $T$ is (a  well-defined functional) given by
$
\textnormal{Tr}(T)=\sum_{n\in\mathbb{N}^n_0}e_n'(f_n).
$
Because $L^p$-spaces have the Grothendieck approximation property and as consequence we can compute the nuclear trace of every $r$-nuclear pseudo-multipliers. We will compute it from Delgado Theorem (Theorem \ref{Theorem1}). For to do so, let us consider a $r-$nuclear Fourier integral operator  $F:L^{p}(\mathbb{R}^n)\rightarrow L^{p}(\mathbb{R}^n),$ $2\leq p<\infty.$  If $a$ is the symbol associated to $F,$ in view of \eqref{symboldecompositionT}, we have
\begin{align*}
\int\limits_{\mathbb{R}^{2n}} & e^{i\phi(x,\xi)-2\pi ix\cdot \xi}a(x,\xi)d\xi\,dx = \int\limits_{\mathbb{R}^n}\int\limits_{\mathbb{R}^n}e^{i\phi(x,\xi)-2\pi ix\cdot \xi} e^{-i\phi(x,\xi)}\sum_{k=1}^{\infty}h_k(x)(\mathscr{F}{g}_{k})(-\xi)d\xi\,dx\\
&= \int\limits_{\mathbb{R}^n}\sum_{k=1}^{\infty}h_k(x)\int\limits_{\mathbb{R}^n}e^{-2\pi ix\cdot \xi} (\mathscr{F}^{-1}{g}_{k})(\xi)d\xi\,dx\\
&= \int\limits_{\mathbb{R}^n}\sum_{k=1}^{\infty}h_k(x)g_{k}(x)dx=\textnormal{Tr}(F).
\end{align*}
So, we obtain the trace formula
\begin{equation}
\textnormal{Tr}(F)=\int\limits_{\mathbb{R}^n}\int\limits_{\mathbb{R}^n}   e^{i\phi(x,\xi)-2\pi ix\cdot \xi}a(x,\xi)d\xi\,dx.
\end{equation}

Now, in order to determinate a relation with the eigenvalues of $F$ we recall that, the nuclear trace of an $r$-nuclear operator  on a Banach space coincides with the spectral trace, provided that $0<r\leq \frac{2}{3}.$ For $\frac{2}{3}\leq r\leq 1$ we recall the following result (see \cite{O}).
\begin{theorem} Let $T:L^p(\mu)\rightarrow L^p(\mu)$ be a $r$-nuclear operator as in \eqref{nuc2}. If $\frac{1}{r}=1+|\frac{1}{p}-\frac{1}{2}|,$ then, 
\begin{equation}
\textnormal{Tr}(T):=\sum_{n\in\mathbb{N}^n_0}e_n'(f_n)=\sum_{n}\lambda_n(T)
\end{equation}
where $\lambda_n(T),$ $n\in\mathbb{N}$ is the sequence of eigenvalues of $T$ with multiplicities taken into account. 
\end{theorem}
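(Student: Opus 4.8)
The plan is to establish this Grothendieck--Lidskii type identity by combining the approximation property of $L^p$-spaces with a sharp eigenvalue summability estimate, and then identifying the two traces through the Fredholm determinant. First I would record the two endpoint situations interpolated by the exponent relation $\frac{1}{r}=1+|\frac{1}{p}-\frac{1}{2}|$: for $p=2$ it forces $r=1$, which is Lidskii's theorem on the Hilbert space $L^2(\mu)$, and for $p\in\{1,\infty\}$ it forces $r=\frac{2}{3}$, which is the classical Grothendieck--Lidskii theorem valid on any Banach space with the approximation property. Since each $L^p(\mu)$, $1\le p<\infty$, enjoys the metric approximation property, the nuclear trace $\textnormal{Tr}(T)=\sum_n e_n'(y_n)$ is independent of the representation \eqref{nuc2} and defines a functional that is continuous with respect to the $r$-nuclear semi-norm $n_r(\cdot)$.

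The analytic heart of the matter is the eigenvalue estimate: I would prove that an $r$-nuclear operator $T$ on $L^p(\mu)$, with $\frac{1}{r}=1+|\frac{1}{p}-\frac{1}{2}|$, has an absolutely summable eigenvalue sequence, $\sum_n|\lambda_n(T)|<\infty$. This is where the geometry of $L^p$ enters. One bounds the approximation numbers of $T$ by its $r$-nuclear semi-norm and then applies the Weyl-type inequality relating approximation numbers to eigenvalues, whose governing exponent is fixed by the type (equivalently cotype) of $L^p(\mu)$. The relation $\frac{1}{r}=1+|\frac{1}{p}-\frac{1}{2}|$ is precisely the borderline exponent that places $(\lambda_n(T))_n$ in $\ell^1$; note that since $0<r\le 1$ every such $T$ is in particular nuclear, so Grothendieck's determinant theory is available.

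With absolute summability of the eigenvalues secured, I would identify the two traces via the regularized Fredholm determinant $d(z):=\det(I-zT)$. On the one hand the summability $\sum_n|\lambda_n(T)|<\infty$ ensures that $d(z)$ admits the convergent product $\prod_n(1-z\lambda_n(T))$, so that $-d'(0)=\sum_n\lambda_n(T)$. On the other hand, the determinant may be computed by truncating the representation \eqref{nuc2} to finite-rank operators $T_N$ with $n_r(T-T_N)\to 0$; for finite rank the coefficient $-d_N'(0)$ is the ordinary matrix trace, equal both to $\sum_n\lambda_n(T_N)$ and to the nuclear trace $\textnormal{Tr}(T_N)$, and passing to the limit using the continuity of the nuclear trace in $n_r(\cdot)$ yields $-d'(0)=\textnormal{Tr}(T)$. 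Comparing the two expressions for $-d'(0)$ gives $\textnormal{Tr}(T)=\sum_n\lambda_n(T)$.

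The step I expect to be the main obstacle is the eigenvalue summability with the sharp exponent: establishing $\sum_n|\lambda_n(T)|<\infty$ under $\frac{1}{r}=1+|\frac{1}{p}-\frac{1}{2}|$ requires the full K\"onig--Pietsch theory of eigenvalues and $s$-numbers together with the precise type/cotype constants of $L^p(\mu)$, and it is not accessible by any soft argument; indeed on a general Banach space the identity fails for $r$ between $\frac{2}{3}$ and $1$, so the $L^p$ geometry is essential precisely here. Once this summability and the resulting validity of the product formula are available, the approximation property, the finite-rank trace identity, and the determinant identification are comparatively routine.
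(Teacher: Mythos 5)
You should first note that the paper does not prove this statement at all: it is recalled verbatim from Reinov and Latif \cite{O}, so there is no internal proof to compare against. Your sketch is therefore an attempt to reconstruct the proof of a cited external theorem. For the record, the argument in \cite{O} does not go through Fredholm determinants; it factorizes the $r$-nuclear operator through a diagonal operator between sequence spaces and uses the principle of related operators (the nonzero eigenvalues and the traces of $UV$ and $VU$ coincide) to transfer the problem to a trace-class operator on a Hilbert space, where Lidskii's theorem applies. That route buys you the sharp exponent $\frac{1}{r}=1+|\frac{1}{p}-\frac{1}{2}|$ directly from the factorization, with no need to first establish eigenvalue summability as a separate step.

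The main gap in your proposal is the mechanism you offer for the eigenvalue summability. Bounding approximation numbers by the $r$-nuclear quasi-norm only yields $a_N(T)=O(N^{1-\frac{1}{r}})$ (truncate an almost-optimal representation and use that a decreasing $\ell^r$ sequence satisfies $c_N\leq N^{-1/r}\Vert c\Vert_{\ell^r}$), and feeding this into the Weyl inequality gives $|\lambda_n(T)|=O(n^{1-\frac{1}{r}})$, which is summable only for $r<\frac{1}{2}$. This misses even Grothendieck's universal $r=\frac{2}{3}$ result, let alone the $L^p$-sharp exponent. The Weyl inequality itself is universal and carries no information about the geometry of $L^p$; the place where that geometry enters is in estimating the absolutely summing (or $(s,2)$-summing) norms of the factors of $T$, via Pietsch factorization, or equivalently in the related-operators reduction. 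As written, your chain of inequalities would fail to close.

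The second, smaller gap is in the determinant identification. From $\sum_n|\lambda_n(T)|<\infty$ you may form the convergent product $\prod_n(1-z\lambda_n(T))$, but the identity $\det(I-zT)=\prod_n(1-z\lambda_n(T))$ does not follow from summability alone: Grothendieck's determinant of a nuclear operator is a priori an entire function of order up to $2$, so one must still exclude a factor $e^{g(z)}$ with $g$ a nonconstant polynomial, i.e., prove the determinant has genus zero. This again requires the $L^p$-specific structure (or the reduction to the Hilbert space case), so it belongs to the hard part of the proof rather than to the ``comparatively routine'' epilogue. Your endpoints discussion, the use of the approximation property, and the finite-rank approximation of the nuclear trace are all correct.
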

As an immediate consequence of the preceding theorem, if the FIO  $F:L^p(\mathbb{R}^n)\rightarrow L^p(\mathbb{R}^n)$ is  $r$-nuclear, the relation $\frac{1}{r}=1+|\frac{1}{p}-\frac{1}{2}|$ implies, 
\begin{equation}
\textnormal{Tr}(F)=\int\limits_{\mathbb{R}^n}\int\limits_{\mathbb{R}^n}   e^{i\phi(x,\xi)-2\pi ix\cdot \xi}a(x,\xi)d\xi\,dx=\sum_{n}\lambda_n(T),
\end{equation}
where $\lambda_n(T),$ $n\in\mathbb{N}$ is the sequence of eigenvalues of $F$ with multiplicities taken into account.\\

\subsection{Characterization of nuclear pseudo-differential operators defined by the Weyl-H\"ormander quantization}

As it was mentioned in the introduction, the Weyl-quantization of a distribution $\sigma\in \mathscr{S}'(\mathbb{R}^{2n})$ is the pseudo-differential operator defined by
\begin{equation}\label{weyl}
\sigma^\omega(x,D_x)f(x)=\int\limits_{\mathbb{R}^{n}}\int\limits_{\mathbb{R}^{n}}e^{i 2\pi (x-y)\cdot \xi}\sigma\left(\frac{x+y}{2},\xi\right)f(y)dyd\xi.
\end{equation}
There exist relations between pseudo-differential operators associated to the classical quantization
\begin{equation}
\sigma(x,D_x)f(x)=\int\limits_{\mathbb{R}^{n}}\int\limits_{\mathbb{R}^{n}}e^{i 2\pi (x-y)\cdot \xi}\sigma\left(x,\xi\right)f(y)dyd\xi=\int\limits_{\mathbb{R}^{n}}e^{i 2\pi x\cdot \xi}\sigma\left(x,\xi\right)(\mathscr{F}f)(\xi)d\xi,
\end{equation}
or in a more general setting,  $\tau$-quantizations defined for every $0<\tau\leq 1,$ by the integral expression,
\begin{equation}
\sigma^{\tau}(x,D_x)f(x)=\int\limits_{\mathbb{R}^{n}}\int\limits_{\mathbb{R}^{n}}e^{i 2\pi (x-y)\cdot \xi}\sigma\left(\tau x+(1-\tau)y,\xi\right)f(y)dyd\xi,
\end{equation}
(with $\tau=\frac{1}{2}$ corresponding to the Weyl-H\"ormander quantization) as it can be viewed in the following proposition (see Delgado \cite{Del2006}).
\begin{proposition}\label{relationbquant}
Let $a, b\in  \mathscr{S}'(\mathbb{R}^{2n}). $ Then, $a^\tau(x,D_x)=b^{\tau'}(x,D_x)$ if and only if
\begin{equation}
a(x,\xi)=\int\limits_{\mathbb{R}^{n}}\int\limits_{\mathbb{R}^{n}} e^{-i2\pi (\xi-\eta)z}b(x+(\tau'-\tau)z,\eta)dzd\eta
\end{equation} provided that $0< \tau,\tau'\leq 1.$
\end{proposition}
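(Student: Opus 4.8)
The plan is to reduce the operator identity $a^\tau(x,D_x)=b^{\tau'}(x,D_x)$ to the equality of the two Schwartz kernels, and then to invert the symbol-to-kernel correspondence attached to a $\tau$-quantization. First I would read off from the defining integral that the operator $\sigma^\tau(x,D_x)$ has kernel
\begin{equation}
K_\sigma^\tau(x,y)=\int_{\mathbb{R}^n}e^{i2\pi(x-y)\cdot\xi}\sigma\left(\tau x+(1-\tau)y,\xi\right)d\xi=\mathscr{F}^{-1}_\xi\big[\sigma(\tau x+(1-\tau)y,\cdot)\big](x-y).
\end{equation}
By the Schwartz kernel theorem an operator from $\mathscr{S}(\mathbb{R}^n)$ to $\mathscr{S}'(\mathbb{R}^n)$ is determined by its kernel in $\mathscr{S}'(\mathbb{R}^{2n})$, so the identity $a^\tau(x,D_x)=b^{\tau'}(x,D_x)$ is equivalent to $K_a^\tau=K_b^{\tau'}$ as distributions.

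The core computation is to invert the map $\sigma\mapsto K_\sigma^\tau$. Fixing the symbol position $w$ and writing $x=w+(1-\tau)z$, $y=w-\tau z$, one checks immediately that $\tau x+(1-\tau)y=w$ and $x-y=z$, so that
\begin{equation}
a(w,\xi)=\int_{\mathbb{R}^n}e^{-i2\pi z\cdot\xi}\,K_a^\tau\big(w+(1-\tau)z,\,w-\tau z\big)\,dz,
\end{equation}
which simply recovers $a(w,\cdot)$ as the Fourier transform of the kernel along the difference variable $z$. I would then substitute the kernel $K_b^{\tau'}$ into this inversion formula. The decisive algebraic step is that, under the \emph{same} parametrization, the position entering $b$ becomes $\tau'x+(1-\tau')y=w+(\tau'-\tau)z$, while the difference $x-y=z$ is unchanged; inserting this and applying Fubini yields
\begin{equation}
a(w,\xi)=\int_{\mathbb{R}^n}\int_{\mathbb{R}^n}e^{-i2\pi(\xi-\eta)\cdot z}\,b\big(w+(\tau'-\tau)z,\eta\big)\,d\eta\,dz,
\end{equation}
which is exactly the claimed identity (with $w$ renamed $x$). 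Because each operation used—partial Fourier transform in the frequency slot and the linear substitution in $(x,y)$—is invertible, the converse direction (that the displayed formula forces $K_a^\tau=K_b^{\tau'}$, hence $a^\tau=b^{\tau'}$) follows by reading the same chain of equalities backwards.

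The hard part will not be the algebra but its justification at the level of tempered distributions: the oscillatory integrals above are distributional partial Fourier transforms, and the informal "change of variables plus Fubini" must be read as the statement that $\sigma\mapsto K_\sigma^\tau$ factors as the partial Fourier transform $\mathscr{F}^{-1}_\xi$ composed with the invertible linear map $(x,y)\mapsto(\tau x+(1-\tau)y,\,x-y)$, both of which are topological isomorphisms of $\mathscr{S}'(\mathbb{R}^{2n})$. Interpreting every integral in this sense, rather than pointwise, is what legitimizes the use of Fourier inversion and of Fubini and makes each displayed identity rigorous for arbitrary $a,b\in\mathscr{S}'(\mathbb{R}^{2n})$.
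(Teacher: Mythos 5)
Your argument is correct: the kernel of $\sigma^\tau(x,D_x)$ is $K^\tau_\sigma(x,y)=\mathscr{F}^{-1}_\xi[\sigma(\tau x+(1-\tau)y,\cdot)](x-y)$, the substitution $x=w+(1-\tau)z$, $y=w-\tau z$ gives $\tau x+(1-\tau)y=w$, $\tau'x+(1-\tau')y=w+(\tau'-\tau)z$ and $x-y=z$, and since both the partial Fourier transform and this linear change of variables are isomorphisms of $\mathscr{S}'(\mathbb{R}^{2n})$, equality of kernels is equivalent to the displayed symbol identity. Note that the paper does not prove this proposition at all — it is quoted from Delgado's 2006 paper \cite{Del2006} — so there is no internal proof to compare against; your kernel-comparison argument is the standard one and is sound, provided (as you say) every integral is read as a distributional partial Fourier transform rather than pointwise.
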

\begin{theorem}\label{MainTheoremtau}
 Let  $0<r\leq 1.$ Let $a(\cdot,\cdot)$ be a differentiable symbol. Let $2\leq p_1<\infty,$   $1\leq p_2<\infty,$  and let $a^\omega(x,D_x)$ be the Weyl-H\"ormander quantization of the symbol $a(\cdot,\cdot).$ Then, $a^\omega(x,D_x):L^{p_1}(\mathbb{R}^n)\rightarrow L^{p_2}(\mathbb{R}^n)$ is $r$-nuclear, if and only if,  the symbol $a(\cdot,\cdot)$  admits a decomposition of the form
\begin{equation}\label{weyl-deco}
a(x,\xi)=\sum_{k=1}^{\infty}\int\limits_{\mathbb{R}^{n}} e^{-i2\pi z\cdot \xi } h_{k}(x+(1-\tau)z) {g}_k(x- \tau z) dz,\,\,\,a.e.w.,\,\,(x,\xi),
\end{equation} where   $\{g_k\}_{k\in\mathbb{N}}$ and $\{h_k\}_{k\in\mathbb{N}}$ are sequences of functions satisfying 
\begin{equation}
\sum_{k=0}^{\infty}\Vert g_k\Vert^r_{L^{p_1'}}\Vert h_{k}\Vert^r_{L^{p_2}}<\infty.
\end{equation}
\end{theorem}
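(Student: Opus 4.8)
The plan is to reduce the general $\tau$-quantization to the classical (Kohn--Nirenberg) quantization, for which Theorem \ref{MainTheorem} already applies, and then to transport the resulting symbol decomposition back through the quantization-change formula of Proposition \ref{relationbquant}. The starting observation is that the Kohn--Nirenberg quantization is nothing but a Fourier integral operator with the linear phase $\phi(x,\xi)=2\pi x\cdot\xi$: indeed $b^{1}(x,D_x)f(x)=\int_{\mathbb{R}^n}e^{i2\pi x\cdot\xi}b(x,\xi)(\mathscr{F}f)(\xi)\,d\xi$ is exactly of the form \eqref{definition} with $\phi(x,\xi)=2\pi x\cdot\xi$. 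Hence Theorem \ref{MainTheorem}, specialized to this phase, characterizes the $r$-nuclearity of $b(x,D_x)$ by the decomposition $b(x,\xi)=e^{-i2\pi x\cdot\xi}\sum_{k}h_k(x)(\mathscr{F}^{-1}g_k)(\xi)$, subject to $\sum_k\|g_k\|_{L^{p_1'}}^r\|h_k\|_{L^{p_2}}^r<\infty$.

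First I would invoke Proposition \ref{relationbquant} with $\tau'=1$ to select the Kohn--Nirenberg symbol $b$ for which $a^\tau(x,D_x)=b^{1}(x,D_x)=b(x,D_x)$ as operators; the two symbols are then tied by $a(x,\xi)=\int_{\mathbb{R}^n}\int_{\mathbb{R}^n}e^{-i2\pi(\xi-\eta)\cdot z}\,b(x+(1-\tau)z,\eta)\,dz\,d\eta$. Since $a^\tau(x,D_x)$ and $b(x,D_x)$ coincide, one is $r$-nuclear from $L^{p_1}$ into $L^{p_2}$ if and only if the other is, so I may apply the characterization above to $b$. Substituting $b(x+(1-\tau)z,\eta)=e^{-i2\pi(x+(1-\tau)z)\cdot\eta}\sum_k h_k(x+(1-\tau)z)(\mathscr{F}^{-1}g_k)(\eta)$ into the relation and integrating in $\eta$ first, the exponential factors combine to $e^{i2\pi\eta\cdot(\tau z-x)}$ tested against $(\mathscr{F}^{-1}g_k)(\eta)$; by Fourier inversion this integral collapses to $g_k(x-\tau z)$, leaving
\[
a(x,\xi)=\sum_{k=1}^{\infty}\int_{\mathbb{R}^n}e^{-i2\pi z\cdot\xi}\,h_k(x+(1-\tau)z)\,g_k(x-\tau z)\,dz,
\]
which is precisely \eqref{weyl-deco}. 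Every step is reversible: starting from a decomposition \eqref{weyl-deco} with the required summability, the same computation read backwards reconstructs the decomposition of $b$ demanded by Theorem \ref{MainTheorem}, whence $b(x,D_x)=a^\tau(x,D_x)$ is $r$-nuclear. This disposes of both implications at once, and the Weyl case (Theorem \ref{MainTheorem2}) is recovered by setting $\tau=\tfrac12$.

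The main obstacle is analytic rather than structural. The identity in Proposition \ref{relationbquant} is an equality of tempered distributions, the collapse of the inner $\eta$-integral to $g_k(x-\tau z)$ is a Fourier-inversion statement that must be read in the distributional sense, and the interchange of the series $\sum_k$ with the double integral must be justified. This is exactly where the summability $\sum_k\|g_k\|_{L^{p_1'}}^r\|h_k\|_{L^{p_2}}^r<\infty$ together with the differentiability hypothesis on $a$ enters: it is used to guarantee that the Kohn--Nirenberg symbol $b$ satisfies $b(x,\cdot)\in L^1_{loc}(\mathbb{R}^n)$ for a.e.\ $x$, which is precisely the standing assumption under which Theorem \ref{MainTheorem} applies to $b(x,D_x)$. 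Once this local integrability is secured, the manipulations above reduce, as in the proof of Theorem \ref{MainTheorem}, to applications of Fubini--Tonelli and the dominated convergence theorem, and the argument closes.
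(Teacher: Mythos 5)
Your proposal is correct and follows essentially the same route as the paper: invoke Proposition \ref{relationbquant} with $\tau'=1$ to identify $a^\tau(x,D_x)$ with a Kohn--Nirenberg operator $b(x,D_x)$, apply Theorem \ref{MainTheorem} with the linear phase $\phi(x,\xi)=2\pi x\cdot\xi$ to characterize the $r$-nuclearity of $b$, substitute that decomposition into the quantization-change formula and collapse the $\eta$-integral by Fourier inversion to obtain \eqref{weyl-deco}, then reverse the computation for the converse. Your additional remarks on justifying the interchange of sum and integral and the distributional reading of the inversion step are more careful than the paper's own exposition, but the argument is the same.
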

\begin{proof}
Let us assume that $a^\tau(x,D_x)$ is $r$-nuclear from $L^{p_1}(\mathbb{R}^n)$ into $L^{p_2}(\mathbb{R}^n)$. By Proposition \ref{relationbquant}, $a^\tau(x,D_x)=b(x,D_x)$ where 
$$a(x,\xi)=\int\limits_{\mathbb{R}^{n}}\int\limits_{\mathbb{R}^{n}} e^{-i2\pi (\xi-\eta)z}b(x+(1-\tau)z,\eta)dzd\eta.$$ By Theorem \ref{MainTheorem} applied to $\phi(x,\xi)= 2\pi x\cdot \xi,$ and taking into account that $b(x,D_x)$ is $r$-nuclear,  there exist sequences $h_{k}$ in $L^{p_2}$ and $g_k$ in $L^{p_1'}$ satisfying
\begin{equation}\label{bxxi}
b(x,\xi)=e^{-i2\pi x\cdot \xi}\sum_{k=1}^{\infty}h_{k}(x)(\mathscr{F}^{-1}{g}_k)(\xi),\,\,\,a.e.w.,\,\,(x,\xi),
\end{equation} with 
\begin{equation}\label{deco2}
\sum_{k=0}^{\infty}\Vert g_k\Vert^r_{L^{p_1'}}\Vert h_{k}\Vert^r_{L^{p_2}}<\infty.
\end{equation}
So, we have
\begin{align*}
a(x,\xi)=\int\limits_{\mathbb{R}^{n}}\int\limits_{\mathbb{R}^{n}} e^{-i2\pi (\xi-\eta)z-i2\pi(x+(1-\tau)z)\eta}  \left( \sum_{k=1}^{\infty}h_{k}(x+(1-\tau)z)(\mathscr{F}^{-1}{g}_k)(\eta)\right)  dzd\eta.
\end{align*}
Since
\begin{align*}  -i2\pi (\xi-\eta)z-i2\pi  &(x+(1-\tau)z)\cdot\eta \\
&=-i2\pi \xi\cdot z+i2\pi \eta \cdot z-i2\pi x\cdot \eta-i2\pi (1-\tau)z\cdot \eta \\
&=-i2\pi \xi\cdot z-i2\pi x\cdot \eta+i2\pi \tau z\cdot \eta ,
\end{align*} we have
\begin{align*}
a(x,\xi)&=\sum_{k=1}^{\infty}\int\limits_{\mathbb{R}^{n}}\int\limits_{\mathbb{R}^{n}}e^{ -i2\pi \xi\cdot z-i2\pi x\cdot \eta+i2\pi \tau z\cdot \eta  } h_{k}(x+(1-\tau)z)(\mathscr{F}^{-1}{g}_k)(\eta)dzd\eta\\
&=\sum_{k=1}^{\infty}\int\limits_{\mathbb{R}^{n}} e^{ -i2\pi \xi\cdot z} h_{k}(x+(1-\tau)z) \int\limits_{\mathbb{R}^{n}}e^{ -i2\pi( x- \tau z)\cdot \eta  } (\mathscr{F}^{-1}{g}_k)(\eta)d\eta dz\\
&=\sum_{k=1}^{\infty}\int\limits_{\mathbb{R}^{n}} e^{-i2\pi z\cdot \xi } h_{k}(x+(1-\tau)z) {g}_k(x- \tau z) dz.
\end{align*}
So, we have proved the first part of the characterization. On the other hand, if we assume \eqref{weyl-deco}, then
 $$a(x,\xi)=\int\limits_{\mathbb{R}^{n}}\int\limits_{\mathbb{R}^{n}} e^{-i2\pi (\xi-\eta)z}b(x+(1-\tau)z,\eta)dzd\eta,$$
where $b(x,\xi)$ is defined as in \eqref{bxxi}. So, from Theorem \ref{MainTheorem} we deduce that $b(x,D_x)$ is $r$-nuclear and from the equality $a^\tau(x,D_x)=b(x,D_x)$ we deduce the $r$-nuclearity of $a^\tau(x,D_x).$ The proof is complete.
\end{proof}
\begin{remark}
Let us observe that from Theorem \ref{MainTheoremtau} with $\tau=1/2,$ we deduce the Theorem \ref{MainTheorem2} mentioned in the introduction.
\end{remark}

\section{Characterizations of Fourier integral operators on $\mathbb{Z}^n$  and  arbitrary compact Lie groups}\label{ZnGLie}

\subsection{FIOs on $\mathbb{Z}^n$} In this subsection we characterize those Fourier integral operators on $\mathbb{Z}^n$ (the set of points in $\mathbb{R}^n$ with integral coordinates) admitting nuclear extensions on Lebesgue spaces. Now we define pseudo-differential operators and discrete  Fourier integral operators on $\mathbb
{Z}^n.$ The discrete Fourier transform of $f\in \ell^1(\mathbb{Z})$ is defined by
\begin{equation}
(\mathscr{F}_{\mathbb{Z}^n}f)(\xi)=\sum_{m\in\mathbb{Z}^n}e^{-i2\pi m\cdot \xi}f(m),\,\,\xi\in [0,1]^n.
\end{equation}
The Fourier inversion formula gives
\begin{equation}
f(m)=\int_{[0,1]^n}e^{i2\pi m\cdot \xi} (\mathscr{F}_{\mathbb{Z}^n}f)(\xi)d\xi,\,\,\,m\in\mathbb{Z}^n.
\end{equation} 
In this setting pseudo-differential operators on $\mathbb{Z}^n$ are defined by the integral form
\begin{equation}\label{pseudo}
  t_m f(n'):=\int_{[0,1]^n} e^{i2\pi n'\cdot\xi}m(n',\xi)(\mathscr{F}_{\mathbb{Z}^n}f)(\xi)d\xi,\,\,\,f\in \ell^1(\mathbb{Z}^n),\,n'\in\mathbb{Z}^n.
\end{equation}
These operators were introduced by Molahajloo in \cite{m}. However, the fundamental work Botchway L., Kibiti G., Ruzhansky M., \cite{RuzhanskyZn} provide a symbolic calculus and other properties for these operators on $\ell^p$-spaces. In particular,   Fourier integral operators on $\mathbb{Z}^n$ were  defined in such reference as integral operators of the form
\begin{equation}\label{pseudo}
  \mathfrak{f}_{a,\phi} f(n'):=\int_{[0,1]^n} e^{i\phi( n',\xi)}a(n',\xi)(\mathscr{F}_{\mathbb{Z}^n}f)(\xi)d\xi,\,\,\,f\in \ell^1(\mathbb{Z}^n),\,n'\in\mathbb{Z}^n.
\end{equation}
Our main tool in the characterization of nuclear FIOs on $\mathbb{Z}^n$ is the following result due to Jamalpour Birgani \cite{Majid}.
\begin{theorem}\label{Majid2}
 Let  $0<r\leq 1,$  $1\leq p_1<\infty,$   $1\leq p_2<\infty,$  and let $t_m$ be the pseudo-differential  operator associated to  the symbol $m(\cdot,\cdot).$ Then, $t_m:\ell^{p_1}(\mathbb{Z}^n)\rightarrow \ell^{p_2}(\mathbb{Z}^n)$ is $r$-nuclear, if and only if,  the symbol $m(\cdot,\cdot)$  admits a decomposition of the form
\begin{equation}\label{symboldecompositionT22222}
m(n',\xi)=e^{-i2\pi n'\xi}\sum_{k=1}^{\infty}h_{k}(n')(\mathscr{F}_{\mathbb{Z}^n}{{g}}_k)(-\xi),\,\,\,a.e.w.,\,\,(n',\xi),
\end{equation} where   $\{g_k\}_{k\in\mathbb{N}}$ and $\{h_k\}_{k\in\mathbb{N}}$ are sequences of functions satisfying 
\begin{equation}
\sum_{k=0}^{\infty}\Vert g_k\Vert^r_{\ell^{p_1'}}\Vert h_{k}\Vert^r_{\ell^{p_2}}<\infty.
\end{equation}
\end{theorem}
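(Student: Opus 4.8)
The plan is to specialize Delgado's characterization (Theorem \ref{Theorem1}) to the $\sigma$-finite measure spaces $(\mathbb{Z}^n,\nu)$, where $\nu$ is the counting measure, so that $L^p(\mathbb{Z}^n,\nu)=\ell^p(\mathbb{Z}^n)$. The argument runs in two directions exactly as in the proof of Theorem \ref{MainTheorem}, and the key simplification relative to the Euclidean case is that each point mass $\delta_l$, defined by $\delta_l(m)=1$ if $m=l$ and $\delta_l(m)=0$ otherwise, already belongs to every $\ell^{p_1}(\mathbb{Z}^n)$. Hence no approximate-identity or Lebesgue-differentiation argument is required, and this is precisely why the hypothesis here relaxes to $1\le p_1<\infty$ rather than $2\le p_1<\infty$.

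First I would rewrite $t_m$ as a summation operator with a kernel. Inserting the definition of $\mathscr{F}_{\mathbb{Z}^n}$ into the defining formula of $t_m$ and interchanging the sum over $\mathbb{Z}^n$ with the integral over $[0,1]^n$ gives
\begin{equation*}
t_m f(n') = \sum_{l\in\mathbb{Z}^n} K(n',l)\, f(l), \qquad K(n',l):=\int_{[0,1]^n} e^{i2\pi (n'-l)\cdot\xi}\, m(n',\xi)\, d\xi,
\end{equation*}
so that for fixed $n'$ the sequence $(K(n',l))_{l}$ consists of the torus Fourier coefficients of the function $\xi\mapsto e^{i2\pi n'\cdot\xi} m(n',\xi)$.

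Next, assume $t_m$ is $r$-nuclear. By Theorem \ref{Theorem1} there exist $(h_k)_k\subset \ell^{p_2}(\mathbb{Z}^n)$ and $(g_k)_k\subset \ell^{p_1'}(\mathbb{Z}^n)$ with $\sum_k\|h_k\|_{\ell^{p_2}}^r\|g_k\|_{\ell^{p_1'}}^r<\infty$ whose associated kernel equals $\sum_k h_k(n')g_k(l)$. Testing both descriptions of $t_m$ on $\delta_l$ and using $(\mathscr{F}_{\mathbb{Z}^n}\delta_l)(\xi)=e^{-i2\pi l\cdot\xi}$ identifies the two kernels directly, namely $t_m\delta_l(n')=K(n',l)=\sum_k h_k(n')g_k(l)$. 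Reconstructing the symbol from its torus Fourier coefficients then yields
\begin{equation*}
e^{i2\pi n'\cdot\xi} m(n',\xi)=\sum_{l\in\mathbb{Z}^n} K(n',l)\, e^{i2\pi l\cdot\xi} = \sum_k h_k(n')\sum_{l\in\mathbb{Z}^n} g_k(l)\, e^{i2\pi l\cdot\xi} = \sum_k h_k(n')\,(\mathscr{F}_{\mathbb{Z}^n} g_k)(-\xi),
\end{equation*}
and multiplying by $e^{-i2\pi n'\cdot\xi}$ is exactly the decomposition \eqref{symboldecompositionT22222}. For the converse, assuming \eqref{symboldecompositionT22222}, I would substitute it back into the defining formula of $t_m$, cancel $e^{i2\pi n'\cdot\xi}e^{-i2\pi n'\cdot\xi}$, expand $(\mathscr{F}_{\mathbb{Z}^n} g_k)(-\xi)$ and $(\mathscr{F}_{\mathbb{Z}^n} f)(\xi)$ as Fourier series and use the orthogonality relation $\int_{[0,1]^n} e^{i2\pi(l-l')\cdot\xi}\,d\xi=\delta_{l,l'}$ to recover the summation form $t_m f(n')=\sum_{l}\big(\sum_k h_k(n')g_k(l)\big)f(l)$; Theorem \ref{Theorem1} then returns the $r$-nuclearity.

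The main technical point to control is the interchange of the sum $\sum_k$ with the torus integral and with the Fourier sum over $\mathbb{Z}^n$, together with the interpretation of $\mathscr{F}_{\mathbb{Z}^n} g_k$ on the torus when $p_1<2$ (so that $p_1'>2$ and the transform is understood in a suitable, possibly distributional, sense). The summability condition $\sum_k\|h_k\|_{\ell^{p_2}}^r\|g_k\|_{\ell^{p_1'}}^r<\infty$, together with $r\le 1$ and Hölder's inequality, forces the kernel to lie in $\ell^1(\mathbb{Z}^n\times\mathbb{Z}^n)$, so that $K$ is well defined and all rearrangements are legitimate at the kernel level; the discrete analogues of Lemma 3.1 and Lemma 3.4-$(d)$ of \cite{DRboundedvariable} (already invoked in the proof of Theorem \ref{MainTheorem}) supply the dominated-convergence control needed to pass the limits through the summation.
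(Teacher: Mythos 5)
The paper does not actually prove this statement: it is imported verbatim from Jamalpour Birgani \cite{Majid} and then used as a black box to derive Theorem \ref{FIOSZncardona}. Your argument is therefore a self-contained proof of a cited result rather than a reproduction of anything in the text, and it is essentially correct. It is the natural discrete analogue of the paper's proof of Theorem \ref{MainTheorem} on $\mathbb{R}^n$: Delgado's Theorem \ref{Theorem1} applied to counting measure, with the point masses $\delta_l$ (which lie in every $\ell^{p_1}$ and satisfy $(\mathscr{F}_{\mathbb{Z}^n}\delta_l)(\xi)=e^{-i2\pi l\cdot\xi}$ exactly) playing the role of the normalized indicators $\mathscr{F}^{-1}\delta^r_{\xi_0}$; this is precisely what eliminates the Lebesgue differentiation step and explains the relaxation from $2\le p_1$ to $1\le p_1$. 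Your identification of $K(n',l)$ with the torus Fourier coefficients of $\xi\mapsto e^{i2\pi n'\cdot\xi}m(n',\xi)$, and the Parseval computation in the converse direction, are both sound.

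One technical claim should be corrected, though it does not damage the proof. The summability condition $\sum_k\Vert h_k\Vert^r_{\ell^{p_2}}\Vert g_k\Vert^r_{\ell^{p_1'}}<\infty$ does \emph{not} force the kernel into $\ell^1(\mathbb{Z}^n\times\mathbb{Z}^n)$: the crude bound gives only $\sum_k\Vert h_k\Vert_{\ell^{1}}\Vert g_k\Vert_{\ell^{1}}$, and the $\ell^1$ norms can be arbitrarily larger than the $\ell^{p_2}$ and $\ell^{p_1'}$ norms. Fortunately you never need this: testing on the finitely supported $\delta_l$ involves no interchange of limits at all, since for counting measure the identity $K(n',l)=\sum_k h_k(n')g_k(l)$ from Theorem \ref{Theorem1} holds at every point; and the interchange of $\sum_k$ with $\int_{[0,1]^n}$ in the reconstruction step is justified directly by $|h_k(n')|\le\Vert h_k\Vert_{\ell^{p_2}}$ together with $\Vert\mathscr{F}_{\mathbb{Z}^n}g_k\Vert_{L^{p_1}(\mathbb{T}^n)}\le\Vert g_k\Vert_{\ell^{p_1'}}$ when $p_1'\le 2$, and by distributional convergence of the series when $p_1'>2$, exactly as you indicate. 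I would simply delete the $\ell^1$ assertion and the appeal to the lemmas of \cite{DRboundedvariable}, which are not needed in the discrete setting. You should also record explicitly the standing assumption $m(n',\cdot)\in L^1([0,1]^n)$ for each $n'$, which is what makes $K(n',l)$ and the Fourier-coefficient identification meaningful (it is the discrete counterpart of the hypothesis $a(x,\cdot)\in L^1_{loc}$ in Theorem \ref{MainTheorem}).
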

As a consequence of the previous result, we give a simple proof for our characterization.

\begin{theorem}\label{FIOSZncardona}
 Let  $0<r\leq 1,$  $1\leq p_1<\infty,$   $1\leq p_2<\infty,$  and let $\mathfrak{f}_{a,\phi}$ be the Fourier integral operator associated to the phase function $\phi$ and to  the symbol $a(\cdot,\cdot).$ Then, $\mathfrak{f}_{a,\phi}:\ell^{p_1}(\mathbb{Z}^n)\rightarrow \ell^{p_2}(\mathbb{Z}^n)$ is $r$-nuclear, if and only if,  the symbol $a(\cdot,\cdot)$  admits a decomposition of the form
\begin{equation}\label{symboldecompositionT2222}
a(n',\xi)=e^{-i\phi(n',\xi)}\sum_{k=1}^{\infty}h_{k}(x)(\mathscr{F}_{\mathbb{Z}^n}{{g}}_k)(-\xi),\,\,\,a.e.w.,\,\,(n',\xi),
\end{equation} where   $\{g_k\}_{k\in\mathbb{N}}$ and $\{h_k\}_{k\in\mathbb{N}}$ are sequences of functions satisfying 
\begin{equation}
\sum_{k=0}^{\infty}\Vert g_k\Vert^r_{\ell^{p_1'}}\Vert h_{k}\Vert^r_{\ell^{p_2}}<\infty.
\end{equation}
\end{theorem}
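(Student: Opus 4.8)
The plan is to reduce the statement to the already-established characterization for pseudo-differential operators on $\mathbb{Z}^n$, namely Theorem \ref{Majid2}, by recognizing that a Fourier integral operator $\mathfrak{f}_{a,\phi}$ is nothing but a pseudo-differential operator $t_m$ with a suitably modulated symbol. First I would introduce the auxiliary symbol
\begin{equation*}
m(n',\xi):=e^{i\phi(n',\xi)-i2\pi n'\cdot\xi}\,a(n',\xi),
\end{equation*}
and verify by direct substitution into the integral definition \eqref{pseudo} that $t_m=\mathfrak{f}_{a,\phi}$ as operators on $\ell^{p_1}(\mathbb{Z}^n)$; indeed, inserting $m$ into the formula for $t_m$ produces the factor $e^{i2\pi n'\cdot\xi}\,e^{i\phi(n',\xi)-i2\pi n'\cdot\xi}=e^{i\phi(n',\xi)}$, which is precisely the phase appearing in $\mathfrak{f}_{a,\phi}$. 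Since $|e^{i\phi(n',\xi)-i2\pi n'\cdot\xi}|=1$, this modulation is harmless and the two operators coincide pointwise.

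With this identification in hand, the $r$-nuclearity of $\mathfrak{f}_{a,\phi}:\ell^{p_1}(\mathbb{Z}^n)\rightarrow\ell^{p_2}(\mathbb{Z}^n)$ is equivalent to the $r$-nuclearity of $t_m$. Applying Theorem \ref{Majid2} to the symbol $m$, I obtain that $t_m$ is $r$-nuclear if and only if
\begin{equation*}
m(n',\xi)=e^{-i2\pi n'\cdot\xi}\sum_{k=1}^{\infty}h_k(n')(\mathscr{F}_{\mathbb{Z}^n}g_k)(-\xi),
\end{equation*}
for sequences $\{g_k\}$ in $\ell^{p_1'}$ and $\{h_k\}$ in $\ell^{p_2}$ with $\sum_k\|g_k\|^r_{\ell^{p_1'}}\|h_k\|^r_{\ell^{p_2}}<\infty$.

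The final step is purely algebraic: substituting the definition of $m$ into this characterization gives
\begin{equation*}
e^{i\phi(n',\xi)-i2\pi n'\cdot\xi}a(n',\xi)=e^{-i2\pi n'\cdot\xi}\sum_{k=1}^{\infty}h_k(n')(\mathscr{F}_{\mathbb{Z}^n}g_k)(-\xi),
\end{equation*}
and multiplying both sides by the unimodular factor $e^{i2\pi n'\cdot\xi-i\phi(n',\xi)}$ yields exactly the desired decomposition \eqref{symboldecompositionT2222} for $a$, with the same summability condition transferring verbatim. Because the reduction is an equivalence at each stage, both directions of the ``if and only if'' follow simultaneously. I do not anticipate any genuine obstacle here: the entire content is the observation that the phase $\phi$ can be absorbed into the symbol, converting the FIO into a pseudo-differential operator to which Theorem \ref{Majid2} applies directly. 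The only point requiring mild care is confirming that $t_m$ and $\mathfrak{f}_{a,\phi}$ literally agree as operators before invoking the nuclearity criterion, which is immediate from the matching of phases described above.
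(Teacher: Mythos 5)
Your proposal is correct and follows essentially the same route as the paper: the paper likewise sets $m(n',\xi)=e^{i\phi(n',\xi)-i2\pi n'\cdot\xi}a(n',\xi)$, identifies $\mathfrak{f}_{a,\phi}$ with the discrete pseudo-differential operator $t_m$, invokes the characterization of Theorem \ref{Majid2}, and solves algebraically for $a$. The only discrepancy is that the paper's text cites Theorem \ref{Ghaemi2} at this step, which is evidently a typo for Theorem \ref{Majid2}, the result you correctly use.
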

\begin{proof}
Let us write the operator  $\mathfrak{f}_{a,\phi}$  as
\begin{equation}  \mathfrak{f}_{a,\phi}f(x)=\int_{[0,1]^n} e^{i\phi( n',\xi)}a(n',\xi)(\mathscr{F}_{\mathbb{Z}^n}f)(\xi)d\xi=\int_{[0,1]^n} e^{i2\pi  n'\cdot\xi}m(n',\xi)(\mathscr{F}_{\mathbb{Z}^n}f)(\xi)d\xi
\end{equation} where $m(n',\xi)=e^{i\phi(n', \xi)-i2\pi n'\cdot \xi}a(n',\xi).$ So, the discrete Fourier integral operator $ \mathfrak{f}_{a,\phi}$ coincides with the discrete pseudo-differential operator $t_m$ with symbol $m.$ By using Theorem \ref{Ghaemi2}, the operator $\mathfrak{f}_{a,\phi}=t_m:\ell^{p_1}(\mathbb{Z}^n)\rightarrow \ell^{p_2}(\mathbb{Z}^n) $ is $r$-nuclear, if and only if,  the symbol $m(\cdot,\cdot)$  admits a decomposition of the form
\begin{equation}\label{symboldecompositionT22}
m(n',\xi)=e^{-i2\pi n'\xi}\sum_{k=1}^{\infty}h_{k}(n')(\mathscr{F}_{\mathbb{Z}^n}{{g}}_k)(-\xi),\,\,\,a.e.w.,\,\,(n',\xi),
\end{equation} where   $\{g_k\}_{k\in\mathbb{N}}$ and $\{h_k\}_{k\in\mathbb{N}}$ are sequences of functions satisfying 
\begin{equation}
\sum_{k=0}^{\infty}\Vert g_k\Vert^r_{\ell^{p_1'}}\Vert h_{k}\Vert^r_{\ell^{p_2}}<\infty.
\end{equation}
Let us note that from the definition of $m$ we have
$$ e^{i\phi(n',\xi)-i2\pi n'\cdot \xi}a(n',\xi)= e^{-i2\pi n'\cdot \xi}\sum_{k=1}^{\infty}h_{k}(n')(\mathscr{F}_{ \mathbb{Z}^n }{{g}}_k)(-\xi),\,\,\,a.e.w.,\,\,(n',\xi),$$
which in turn, is equivalent to
\begin{equation} a(n',\xi)= e^{-i\phi( n', \xi)}\sum_{k=1}^{\infty}h_{k}(n')(\mathscr{F}_{ \mathbb{Z}^n }{{g}}_k)(-\xi).
\end{equation} Thus, the proof is complete.
\end{proof}
\begin{remark} The nuclear trace of a nuclear discrete pseudo-differential operator on $\mathbb{Z}^n,$ $t_m:\ell^{p}(\mathbb{Z}^n)\rightarrow \ell^{p}(\mathbb{Z}^n),$ $1\leq p<\infty,$ can be computed according to the formula
\begin{equation}
\textnormal{Tr}(t_m)=\sum_{n'\in\mathbb{Z}^n}\int\limits_{[0,1]^n}m(n',\xi)d\xi.
\end{equation} From the proof of the previous criterion, we have that $\mathfrak{f}_{a,\phi}=t_m$ where $m(n',\xi)=e^{\phi(n',\xi)-i2\pi n'\xi}a(n',\xi)$ and consequently, if $\mathfrak{f}_{a,\phi}:\ell^{p}(\mathbb{Z}^n)\rightarrow \ell^{p}(\mathbb{Z}^n),$ $1\leq p<\infty,$ is $r$-nuclear, its nuclear trace is given by
\begin{equation}
\textnormal{Tr}(\mathfrak{f}_{a,\phi})=\sum_{n'\in\mathbb{Z}^n}\int\limits_{[0,1]^n} e^{\phi(n',\xi)-i2\pi n'\xi}a(n',\xi)d\xi.
\end{equation}
\end{remark}

Now, we present an application of the previous result. 
\begin{theorem}
 Let $2\leq p_1<\infty,$ and    $1\leq p_2<\infty.$ If $\mathfrak{f}_{a,\phi}:\ell^{p_1}(\mathbb{Z}^n)\rightarrow \ell^{p_2}(\mathbb{Z}^n)$ is nuclear, then $a(n',\xi)\in \ell^{p_2}_{n'}L^{p_1}_\xi(\mathbb{Z}^n\times \mathbb{T}^n)\cap L^{p_1}_\xi \ell^{p_2}_{n'}(\mathbb{Z}^n\times \mathbb{T}^n) ,$ this means that
 \begin{equation}
     \Vert a(n',\xi)\Vert_{ \ell^{p_2}_{n'}L^{p_1}_\xi(\mathbb{Z}^n\times \mathbb{T}^n),}:=\left(\int\limits_{\mathbb{T}^n} \left(\sum\limits_{n'\in \mathbb{Z}^n}|a(n',\xi)|^{p_2}\right)^{\frac{p_1}{p_2}}d\xi\right)^{\frac{1}{p_1}}<\infty,
 \end{equation} and 
 \begin{equation}
     \Vert a(n',\xi)\Vert_{ L^{p_1}_\xi \ell^{p_2}_{n'}(\mathbb{Z}^n\times \mathbb{T}^n),}:=\left(\sum\limits_{n'\in\mathbb{Z}^n} \left(\int\limits_{\mathbb{T}^n}|a(x,\xi)|^{p_1}d\xi \right)^{\frac{p_2}{p_1}}\right)^{\frac{1}{p_2}}<\infty.
 \end{equation}
 \end{theorem}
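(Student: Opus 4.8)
The plan is to mimic exactly the argument used in the proof of Theorem \ref{decaying}, replacing the Euclidean Fourier transform by the discrete Fourier transform $\mathscr{F}_{\mathbb{Z}^n}$ and invoking the discrete characterization of Theorem \ref{FIOSZncardona} in place of Theorem \ref{MainTheorem}. Since $\mathfrak{f}_{a,\phi}:\ell^{p_1}(\mathbb{Z}^n)\rightarrow\ell^{p_2}(\mathbb{Z}^n)$ is nuclear (the case $r=1$), Theorem \ref{FIOSZncardona} furnishes sequences $\{g_k\}\subset\ell^{p_1'}$ and $\{h_k\}\subset\ell^{p_2}$ such that
\[
a(n',\xi)=e^{-i\phi(n',\xi)}\sum_{k=1}^{\infty}h_k(n')(\mathscr{F}_{\mathbb{Z}^n}g_k)(-\xi),\qquad \sum_{k=1}^{\infty}\Vert g_k\Vert_{\ell^{p_1'}}\Vert h_k\Vert_{\ell^{p_2}}<\infty.
\]
This decomposition is the whole engine of the proof; everything else is the systematic extraction of the two mixed-norm bounds.

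For the first estimate I would take the $\ell^{p_2}_{n'}$-norm inside the $\xi$-integral. Using that $|e^{-i\phi(n',\xi)}|=1$ pointwise and the triangle inequality in $\ell^{p_2}_{n'}$ (valid since $p_2\geq 1$), one gets $\Vert a(\cdot,\xi)\Vert_{\ell^{p_2}_{n'}}\leq \sum_{k}\Vert h_k\Vert_{\ell^{p_2}}\,|(\mathscr{F}_{\mathbb{Z}^n}g_k)(-\xi)|$. Taking the $L^{p_1}_\xi$-norm over $\mathbb{T}^n$ and applying the triangle inequality once more reduces the matter to estimating $\Vert (\mathscr{F}_{\mathbb{Z}^n}g_k)(-\cdot)\Vert_{L^{p_1}(\mathbb{T}^n)}$. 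Here the hypothesis $p_1\geq 2$ is decisive: it gives $p_1'\leq 2$, so the Hausdorff--Young inequality for the $\mathbb{Z}^n$--$\mathbb{T}^n$ duality yields $\Vert \mathscr{F}_{\mathbb{Z}^n}g_k\Vert_{L^{p_1}(\mathbb{T}^n)}\leq \Vert g_k\Vert_{\ell^{p_1'}}$, while the reflection $\xi\mapsto-\xi$ is measure preserving on $\mathbb{T}^n$ and hence leaves the $L^{p_1}(\mathbb{T}^n)$-norm invariant. Summing then produces the bound $\sum_k \Vert h_k\Vert_{\ell^{p_2}}\Vert g_k\Vert_{\ell^{p_1'}}<\infty$.

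For the second estimate I would reverse the order of the norms, first fixing $n'$ and computing $\Vert a(n',\cdot)\Vert_{L^{p_1}_\xi}$. The triangle inequality in $L^{p_1}_\xi$ together with the same Hausdorff--Young bound gives $\Vert a(n',\cdot)\Vert_{L^{p_1}_\xi}\leq \sum_k |h_k(n')|\,\Vert g_k\Vert_{\ell^{p_1'}}$; taking the $\ell^{p_2}_{n'}$-norm and applying the triangle inequality in $\ell^{p_2}_{n'}$ again yields the same finite quantity $\sum_k \Vert h_k\Vert_{\ell^{p_2}}\Vert g_k\Vert_{\ell^{p_1'}}$. Thus both mixed norms are dominated by the nuclear sum and are therefore finite.

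The computations are entirely routine once the decomposition is available; the only point genuinely requiring care -- the exact analogue of the subtle step in Theorem \ref{decaying} -- is to apply the Hausdorff--Young inequality in the correct direction for the discrete Fourier transform, where the condition $p_1\geq 2$ (equivalently $p_1'\leq 2$) is precisely what makes $\mathscr{F}_{\mathbb{Z}^n}:\ell^{p_1'}(\mathbb{Z}^n)\rightarrow L^{p_1}(\mathbb{T}^n)$ bounded, and to note that neither the unimodular phase $e^{-i\phi}$ nor the reflection in the frequency variable affects any of the norms involved.
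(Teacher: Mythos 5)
Your proof is correct and follows exactly the route the paper intends: the paper itself only remarks that the result is ``an adaptation of the proof of Theorem \ref{decaying}'' using the discrete Hausdorff--Young inequality $\Vert \mathscr{F}_{\mathbb{Z}^n}g_k\Vert_{L^{p_1}(\mathbb{T}^n)}\leq \Vert g_k\Vert_{\ell^{p_1'}(\mathbb{Z}^n)}$, and you have supplied precisely that adaptation, starting from the decomposition of Theorem \ref{FIOSZncardona} and correctly identifying $p_1\geq 2$ as the hypothesis that makes Hausdorff--Young applicable. Nothing further is needed.
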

The proof is only an adaptation of the proof that we have done for Theorem \ref{decaying}. We only need to use a discrete Hausdorff-Young inequality. In this case, we use \begin{equation}
    \Vert \mathscr{F}_{\mathbb{Z}^n}g_k\Vert_{L^{p_1}(\mathbb{T}^n)}\leq \Vert g_k\Vert_{\ell^{p_1'}(\mathbb{Z}^n)}.
\end{equation}

\subsection{FIOs on compact Lie groups}
In this subsection we characterize nuclear Fourier integral operators on compact Lie groups. Although the results presented are valid for arbitrary Hausdorff and compact groups, we restrict our attention to Lie groups taking into account their differentiable structure, which in our case could be give potential applications of our results to the understanding on the spectrum of certain operators associated to differential problems.

Let us consider a compact Lie group $G$ with Lie algebra $\mathfrak{g}$. We will equip $G$ with the Haar measure $\mu_{G}$. The following identities follow from the Fourier transform on $G$
 $$ (\mathscr{F}_{G}\varphi)(\xi)\equiv \widehat{\varphi}(\xi)=\int_{G}\varphi(x)\xi(x)^*dx,\,\,\,\,\,\,\,\,\,\,\,\,\,\,\,\, \varphi(x)=\sum_{[\xi]\in \widehat{G}}d_{\xi}\text{Tr}(\xi(x)\widehat{\varphi}(\xi)) ,$$
and the Peter-Weyl Theorem on $G$ implies the Plancherel identity on $L^2(G),$
$$ \Vert \varphi \Vert_{L^2(G)}= \left(\sum_{[\xi]\in \widehat{G}}d_{\xi}\text{Tr}(\widehat{\varphi}(\xi)\widehat{\varphi}(\xi)^*) \right)^{\frac{1}{2}}=\Vert  \widehat{\varphi}\Vert_{ L^2(\widehat{G} ) } .$$
\noindent Notice that, since $\Vert A \Vert^2_{HS}=\text{Tr}(AA^*)$, the term within the sum is the Hilbert-Schmidt norm of the matrix $\widehat{\varphi}(\xi)$. Any linear operator $A$ on $G$ mapping $C^{\infty}(G)$ into $\mathcal{D}'(G)$ gives rise to a {\em matrix-valued global (or full) symbol} $\sigma_{A}(x,\xi)\in \mathbb{C}^{d_\xi \times d_\xi}$ given by
\begin{equation}
\sigma_A(x,\xi)=\xi(x)^{*}(A\xi)(x),
\end{equation}
which can be understood from the distributional viewpoint. Then it can be shown that the operator $A$ can be expressed in terms of such a symbol as \cite{Ruz}
\begin{equation}\label{mul}Af(x)=\sum_{[\xi]\in \widehat{G}}d_{\xi}\text{Tr}[\xi(x)\sigma_A(x,\xi)\widehat{f}(\xi)]. 
\end{equation}
So, if $\Phi:G\times \widehat{G}\rightarrow \cup_{ [\xi]\in \widehat{G} }\textnormal{GL}(d_\xi)$ is a measurable function (the phase function), and $a:G\times \widehat{G}\rightarrow \cup_{ [\xi]\in \widehat{G} }\mathbb{C}^{d_\xi\times d_\xi}$ is a distribution on $G\times \widehat{G},$ the Fourier integral operator $F=F_{\Phi,a}$ associated to the symbol $a(\cdot,\cdot)$ and to the phase function $\Phi$ is defined by the  Fourier series operator
\begin{equation}\label{FIOsG}Ff(x)=\sum_{[\xi]\in \widehat{G}}d_{\xi}\text{Tr}[\Phi(x,\xi)a(x,\xi)\widehat{f}(\xi)]. 
\end{equation}
In order to present our main result for Fourier integral operators, we recall the following criterion (see Ghaemi, Jamalpour Birgani, Wong \cite{Ghaemi2}). 
\begin{theorem}\label{Ghaemi2}
 Let  $0<r\leq 1,$  $1\leq p_1<\infty,$   $1\leq p_2<\infty,$  and let $A$ be the pseudo-differential operator associated to  the symbol $\sigma_A(\cdot,\cdot).$ Then, $A:L^{p_1}(G)\rightarrow L^{p_2}(G)$ is $r$-nuclear, if and only if,  the symbol $\sigma_A(\cdot,\cdot)$  admits a decomposition of the form
\begin{equation}\label{symboldecompositionT22}
\sigma_A(x,\xi)=\xi(x)^*\sum_{k=1}^{\infty}h_{k}(x)(\mathscr{F}_{G}{\overline{g}}_k)(\xi)^*,\,\,\,a.e.w.,\,\,(x,\xi),
\end{equation} where   $\{g_k\}_{k\in\mathbb{N}}$ and $\{h_k\}_{k\in\mathbb{N}}$ are sequences of functions satisfying 
\begin{equation}
\sum_{k=0}^{\infty}\Vert g_k\Vert^r_{L^{p_1'}}\Vert h_{k}\Vert^r_{L^{p_2}}<\infty.
\end{equation}
\end{theorem}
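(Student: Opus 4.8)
The plan is to mirror the strategy used for Theorem \ref{MainTheorem} and Theorem \ref{FIOSZncardona}, namely to reduce the matrix-valued symbol decomposition to Delgado's characterization of $r$-nuclear integral operators (Theorem \ref{Theorem1}). The bridge between the Fourier-series quantization \eqref{mul} and the kernel form required by Delgado's theorem will be the Plancherel/Parseval identity on $G$ together with the unitarity of the representations $\xi\in\widehat{G}$.

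For the sufficiency direction, I would substitute the assumed decomposition $\sigma_A(x,\xi)=\xi(x)^*\sum_k h_k(x)(\mathscr{F}_{G}\overline{g}_k)(\xi)^*$ into \eqref{mul}. Using $\xi(x)\xi(x)^*=I_{d_\xi}$, the factor $\xi(x)$ cancels, leaving $Af(x)=\sum_k h_k(x)\sum_{[\xi]\in\widehat{G}}d_\xi\mathrm{Tr}[(\mathscr{F}_{G}\overline{g}_k)(\xi)^*\widehat{f}(\xi)]$. The inner object is precisely the Parseval pairing $\int_G f(y)g_k(y)\,dy$, which one checks by expanding $f$ through the Fourier inversion formula and using the matrix identity $\int_G g_k(y)\xi(y)\,dy=(\mathscr{F}_{G}\overline{g}_k)(\xi)^*$ together with the cyclicity of the trace. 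Hence $A$ collapses to the integral operator with kernel $K(x,y)=\sum_k h_k(x)g_k(y)$, and Theorem \ref{Theorem1} delivers $r$-nuclearity directly from $\sum_k\Vert h_k\Vert_{L^{p_2}}^r\Vert g_k\Vert_{L^{p_1'}}^r<\infty$.

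For the necessity direction, I would start from Theorem \ref{Theorem1}, which furnishes sequences $(h_k)\subset L^{p_2}(G)$ and $(g_k)\subset L^{p_1'}(G)$ with the stated summability and $Af(x)=\int_G\big(\sum_k h_k(x)g_k(y)\big)f(y)\,dy$. To recover the symbol I would apply $A$ to the continuous (hence $L^{p_1}$) matrix coefficients $\xi_{ij}$ and invoke the definition $\sigma_A(x,\xi)=\xi(x)^*(A\xi)(x)$. The core computation is the index-level identity $\int_G g_k(y)\xi_{ij}(y)\,dy=\big((\mathscr{F}_{G}\overline{g}_k)(\xi)^*\big)_{ij}$, obtained by unwinding $(\mathscr{F}_{G}\overline{g}_k)(\xi)=\int_G\overline{g_k(y)}\,\xi(y)^*\,dy$ and taking the conjugate-transpose. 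Reassembling in matrix form gives $(A\xi)(x)=\sum_k h_k(x)(\mathscr{F}_{G}\overline{g}_k)(\xi)^*$, and left-multiplication by $\xi(x)^*$ yields exactly the claimed decomposition.

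The delicate points, rather than the algebra, are the analytic justifications. One must verify that the kernel $K(x,y)=\sum_k h_k(x)g_k(y)$ belongs to $L^1(G\times G)$ (the compact-group analogue of Lemma 3.1 of \cite{DRboundedvariable}), so that Fubini and the interchange of the sum over $k$ with the Parseval pairing and with the action of $A$ on each $\xi_{ij}$ are all legitimate; and one must track the conjugations and transposes in the matrix coefficients with care. I expect the main obstacle to be precisely this bookkeeping, combined with establishing the absolute convergence needed to exchange $\sum_k$ with $\sum_{[\xi]}d_\xi\mathrm{Tr}[\cdots]$; once those are secured, both implications follow immediately from Theorem \ref{Theorem1} and Parseval's identity on $G$.
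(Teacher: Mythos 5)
Your proposal is correct and follows essentially the same route the paper takes: the paper quotes this theorem from \cite{Ghaemi2} and reproduces the argument (for the more general homogeneous-manifold version in Section \ref{FIOcompacthomomani}) by applying Delgado's characterization (Theorem \ref{Theorem1}), testing the kernel decomposition against the matrix coefficients $\xi_{ij}$ via the orthogonality relations to recover $\sigma_A(x,\xi)=\xi(x)^*\sum_k h_k(x)\widehat{\overline{g}_k}(\xi)^*$, and conversely using $\widehat{\overline{g}_k}(\xi)^*=\int_G g_k(y)\xi(y)\,dy$ together with Fourier inversion to collapse $A$ to the integral operator with kernel $\sum_k h_k(x)g_k(y)$. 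The analytic justifications you flag (the $L^1$ bound on the kernel and the interchange of $\sum_k$ with the Fourier pairing) are exactly the ones the paper handles by the compact-group analogue of Lemma 3.1 and Lemma 3.4 of \cite{DRboundedvariable}.
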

As a consequence of the previous criterion, we give a simple proof for our characterization. 

\begin{theorem}\label{FIOSGcardona}
 Let  $0<r\leq 1,$  $1\leq p_1<\infty,$   $1\leq p_2<\infty,$  and let $F$ be the Fourier integral operator associated to the phase function $\Phi$ and to  the symbol $a(\cdot,\cdot).$ Then, $F:L^{p_1}(G)\rightarrow L^{p_2}(G)$ is $r$-nuclear, if and only if,  the symbol $a(\cdot,\cdot)$  admits a decomposition of the form
\begin{equation}\label{symboldecompositionT22}
a(x,\xi)=\Phi(x,\xi)^{-1}\sum_{k=1}^{\infty}h_{k}(x)(\mathscr{F}_{G}{\overline{g}}_k)(\xi)^*,\,\,\,a.e.w.,\,\,(x,\xi),
\end{equation} where   $\{g_k\}_{k\in\mathbb{N}}$ and $\{h_k\}_{k\in\mathbb{N}}$ are sequences of functions satisfying 
\begin{equation}
\sum_{k=0}^{\infty}\Vert g_k\Vert^r_{L^{p_1'}}\Vert h_{k}\Vert^r_{L^{p_2}}<\infty.
\end{equation}
\end{theorem}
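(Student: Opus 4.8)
The plan is to reduce the Fourier integral operator $F=F_{\Phi,a}$ to an ordinary pseudo-differential operator on $G$ by absorbing the phase factor $\Phi(x,\xi)$ into a new matrix-valued symbol, and then to apply the characterization of $r$-nuclear pseudo-differential operators furnished by Theorem \ref{Ghaemi2}. This is the exact analogue, in the noncommutative matrix-valued setting, of the reduction already carried out on the lattice in the proof of Theorem \ref{FIOSZncardona}.

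First I would compare the defining Fourier series \eqref{FIOsG} of $F$ with the symbolic representation \eqref{mul} of a pseudo-differential operator on $G$. Setting
$$\sigma(x,\xi):=\xi(x)^{*}\Phi(x,\xi)a(x,\xi),$$
and using that each representation $\xi(x)$ is unitary, so that $\xi(x)\xi(x)^{*}$ is the identity matrix of size $d_\xi$, one obtains $\xi(x)\sigma(x,\xi)=\Phi(x,\xi)a(x,\xi)$ for every $[\xi]\in\widehat{G}$. Inserting this into \eqref{FIOsG} gives $Ff(x)=\sum_{[\xi]\in\widehat{G}}d_\xi\,\textnormal{Tr}[\xi(x)\sigma(x,\xi)\widehat{f}(\xi)]$, so that $F$ coincides with the pseudo-differential operator $A$ whose global symbol is $\sigma_A=\sigma$.

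Next I would invoke Theorem \ref{Ghaemi2} for $A$: the operator $F=A:L^{p_1}(G)\to L^{p_2}(G)$ is $r$-nuclear if and only if there are sequences $\{g_k\}$ in $L^{p_1'}$ and $\{h_k\}$ in $L^{p_2}$ with $\sum_k\Vert g_k\Vert_{L^{p_1'}}^r\Vert h_k\Vert_{L^{p_2}}^r<\infty$ and
$$\xi(x)^{*}\Phi(x,\xi)a(x,\xi)=\xi(x)^{*}\sum_{k=1}^{\infty}h_k(x)(\mathscr{F}_{G}\overline{g}_k)(\xi)^{*},\quad \textnormal{a.e.w.}$$
Multiplying this identity on the left by $\xi(x)$ cancels the common unitary factor $\xi(x)^{*}$ and yields $\Phi(x,\xi)a(x,\xi)=\sum_k h_k(x)(\mathscr{F}_{G}\overline{g}_k)(\xi)^{*}$; multiplying on the left by $\Phi(x,\xi)^{-1}$, which exists since $\Phi(x,\xi)\in\textnormal{GL}(d_\xi)$, produces precisely the asserted decomposition of $a$. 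Reading the chain of equalities in reverse establishes the converse implication.

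The argument is essentially algebraic, so I do not anticipate a genuine obstacle; the steps requiring attention are the matrix (rather than scalar) bookkeeping and the fact that the symbol identities are to be understood almost everywhere in $(x,\xi)$. The two structural hypotheses do the real work: the unitarity of $\xi(x)$ lets the factor $\xi(x)^{*}$ cancel cleanly, and the invertibility of $\Phi(x,\xi)$ is exactly what makes the passage between the FIO symbol $a$ and the pseudo-differential symbol $\sigma$ reversible in both directions.
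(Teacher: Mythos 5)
Your proposal is correct and follows essentially the same route as the paper: absorb the phase into a global symbol $\sigma_A(x,\xi)=\xi(x)^{*}\Phi(x,\xi)a(x,\xi)$, identify $F$ with the pseudo-differential operator $A$ via \eqref{mul}, apply Theorem \ref{Ghaemi2}, and pass between the two symbol decompositions using the unitarity of $\xi(x)$ and the invertibility of $\Phi(x,\xi)$. The only difference is cosmetic: you make the cancellation of the factor $\xi(x)^{*}$ explicit via unitarity, which the paper leaves implicit.
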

\begin{remark}
For the proof we use use the characterization of $r$-nuclear pseudo-differential operators mentioned above. However, this result will be generalized in the next section to arbitrary compact homogeneous manifolds.
\end{remark}
\begin{proof}
Let us observe that the Fourier integral operator $F,$ can be written as
\begin{equation}Ff(x)=\sum_{[\xi]\in \widehat{G}}d_{\xi}\text{Tr}[\Phi(x,\xi)a(x,\xi)\widehat{f}(\xi)]=\sum_{[\xi]\in \widehat{G}}d_{\xi}\text{Tr}[\xi(x)\sigma_A(x,\xi)\widehat{f}(\xi)],
\end{equation} where $\sigma_A(x,\xi)=\xi(x)^*\Phi(x,\xi)a(x,\xi).$ So, the Fourier integral operator $F$ coincides with the pseudo-differential operator $A$ with symbol $\sigma_A.$ In view of Theorem \ref{Ghaemi2}, the operator $F=A:L^{p_1}(G)\rightarrow L^{p_2}(G) $ is $r$-nuclear, if and only if,  the symbol $\sigma_A(\cdot,\cdot)$  admits a decomposition of the form
\begin{equation}\label{symboldecompositionT22}
\sigma_A(x,\xi)=\xi(x)^*\sum_{k=1}^{\infty}h_{k}(x)(\mathscr{F}_{G}{\overline{g}}_k)(\xi)^*,\,\,\,a.e.w.,\,\,(x,\xi),
\end{equation} where   $\{g_k\}_{k\in\mathbb{N}}$ and $\{h_k\}_{k\in\mathbb{N}}$ are sequences of functions satisfying 
\begin{equation}
\sum_{k=0}^{\infty}\Vert g_k\Vert^r_{L^{p_1'}}\Vert h_{k}\Vert^r_{L^{p_2}}<\infty.
\end{equation}
Let us note that from the definition of $\sigma_A$ we have
$$ \xi(x)^*\Phi(x,\xi)a(x,\xi)= \xi(x)^*\sum_{k=1}^{\infty}h_{k}(x)(\mathscr{F}_{G}{\overline{g}}_k)(\xi)^*,\,\,\,a.e.w.,\,\,(x,\xi),$$
which is equivalent to
\begin{equation} a(x,\xi)= \Phi(x,\xi)^{-1}\sum_{k=1}^{\infty}h_{k}(x)(\mathscr{F}_{G}{\overline{g}}_k)(\xi)^*,\,\,\,a.e.w.,\,\,(x,\xi).
\end{equation} Thus, we finish the proof.
\end{proof}
\begin{remark} The nuclear trace of a $r$-nuclear pseudo-differential operator on $G,$ $A:L^{p}(G)\rightarrow L^{p}(G),$ $1\leq p<\infty,$ can be computed according to the formula
\begin{equation}
\textnormal{Tr}(A)=\int\limits_{G}\sum_{[\xi]\in \widehat{G}}d_\xi\textnormal{Tr}[\sigma_A(x,\xi)]dx.
\end{equation} From the proof of the previous theorem, we have that $F=A$ where $\sigma_A(x,\xi)=\xi(x)^{*}\Phi(x,\xi)a(x,\xi)$ and consequently, if $F:L^{p}(G)\rightarrow L^{p}(G),$ $1\leq p<\infty,$ is $r$-nuclear, its nuclear trace is given by
\begin{equation}
\textnormal{Tr}(F)=\int\limits_{G}\sum_{[\xi]\in \widehat{G}}d_\xi\textnormal{Tr}[\xi(x)^{*}\Phi(x,\xi)a(x,\xi)]dx.
\end{equation}
\end{remark}

Now, we illustrate the results above with some examples.
\begin{example}$(\textnormal{The torus}).$ Let us consider the $n$-dimensional torus $G=\mathbb{T}^n:=\mathbb{R}^n/\mathbb{Z}^n$  and its unitary dual $\widehat{\mathbb{T}}^n:=\{e_\ell:\ell\in\mathbb{Z}^n\},$ $e_\ell(x):=e^{i2\pi\ell\cdot x},$ $x\in\mathbb{T}^n.$ By following Ruzhansky  and Turunen \cite{Ruz}, a Fourier integral operator $F$ associated to the phase function $\phi:\mathbb{T}^n\times \widehat{\mathbb{T}}^n\rightarrow \mathbb{C} ,$ and to the symbol $a:\mathbb{T}^n\times \widehat{\mathbb{T}}^n\rightarrow \mathbb{C},$ is defined according to the rule,
\begin{equation}
F\varphi(x)=\sum_{e_\ell \in \widehat{\mathbb{T}}^n  }e^{i\phi(x,e_\ell)}a(x,e_\ell)\widehat{\varphi}(e_\ell),\,\,x\in\mathbb{T}^n,
\end{equation}
where $\widehat{\varphi}(e_\ell)=\int_{\mathbb{T}^n}f(x)e_{\ell}(x)dx,$ is the Fourier transform of $f$ at $e_\ell.$ If we identify $\widehat{\mathbb{T}}^n$ with $ \mathbb{Z}^n,$ and we define $a(x,\ell):=a(x,e_\ell),$ and $\widehat{\varphi}(\ell):=\widehat{\varphi}(e_\ell)$ we give the more familiar expression for $F,$
\begin{equation}
F\varphi(x)=\sum_{\ell \in \mathbb{Z}^n  }e^{i\phi(x,\ell)}a(x,\ell)\widehat{\varphi}(\ell),\,\,x\in\mathbb{T}^n.
\end{equation} Now, by using Theorem \ref{FIOSGcardona}, $F$ is $r$-nuclear, $0<r\leq 1,$ if and only if,  the symbol $a(\cdot,\cdot)$  admits a decomposition of the form
\begin{equation}
a(x,\ell)=e^{-i\phi(x,\ell)}\sum_{k=1}^{\infty}h_{k}(x)\overline{(\widehat{\overline{g}}_k)(\ell)},\,\,\,a.e.w.,\,\,(x,\ell),
\end{equation} where   $\{g_k\}_{k\in\mathbb{N}}$ and $\{h_k\}_{k\in\mathbb{N}}$ are sequences of functions satisfying 
\begin{equation}
\sum_{k=0}^{\infty}\Vert g_k\Vert^r_{L^{p_1'}}\Vert h_{k}\Vert^r_{L^{p_2}}<\infty.
\end{equation} The last condition have been proved for pseudo-differential operators in \cite{Ghaemi}. In this case, the nuclear trace of $F$ can be written as
\begin{equation}
\textnormal{Tr}(F)=\int_{\mathbb{T}^n}\sum_{\ell\in\mathbb{Z}^n}e^{\phi(x,\ell)-i2\pi x\cdot \ell}a(x,\ell)dx.
\end{equation}
\end{example}

\begin{example}$(\textnormal{The group SU(2)}).$ Let us consider the group $\textnormal{SU}(2)\cong \mathbb{S}^3$ consinting of those orthogonal matrices $A$ in $\mathbb{C}^{2\times 2},$ with $\det(A)=1$.   We recall that the unitary dual of $\textnormal{SU}(2)$ (see \cite{Ruz}) can be identified as
\begin{equation}
\widehat{\textnormal{SU}}(2)\equiv \{ [t_{l}]:2l\in \mathbb{N}, d_{l}:=\dim t_{l}=(2l+1)\}.
\end{equation}
There are explicit formulae for $t_{l}$ as
functions of Euler angles in terms of the so-called Legendre-Jacobi polynomials, see \cite{Ruz}. A Fourier integral operator $F$ associated to the phase function $\Phi:\textnormal{SU}(2)\times \widehat{\textnormal{SU}}(2)\rightarrow \cup_{\ell\in \frac{1}{2}\mathbb{N}_0 } \textnormal{GL}(2\ell+1),$ and to the symbol $a:\textnormal{SU}(2)\times \widehat{\textnormal{SU}}(2)\rightarrow \cup_{\ell\in \frac{1}{2}\mathbb{N}_0 } \mathbb{C}^{(2\ell+1)\times (2\ell+1)},$ is defined as,
\begin{equation}
F\varphi(x)=\sum_{  [t_\ell] \in \widehat{\textnormal{SU}}(2)  }  (2\ell+1)\textnormal{Tr}[\Phi(x,t_\ell)a(x,t_\ell)\widehat{\varphi}(t_\ell)],\,\,x\in\textnormal{SU}(2),
\end{equation}
where $$\widehat{\varphi}(e_\ell)=\int_{  \textnormal{SU}(2) }f(x)t_{\ell}(x)dx\in \mathbb{C}^{(2\ell+1)\times (2\ell+1)},\,\,\ell\in \frac{1}{2}\mathbb{N}_0,$$ is the Fourier transform of $f$ at $t_\ell.$ As in the case of the $n$-dimensional torus, if we identify $\widehat{\textnormal{SU}}(2)$ with $ \frac{1}{2}\mathbb{N}_0,$ and we define $a(x,\ell):=a(x,t_\ell),$ and $\widehat{\varphi}(\ell):=\widehat{\varphi}(t_\ell)$ we can write
\begin{equation}
F\varphi(x)=\sum_{  \ell \in \frac{1}{2}\mathbb{N}_0  }  (2\ell+1)\textnormal{Tr}[\Phi(x,\ell)a(x,\ell)\widehat{\varphi}(\ell)],\,\,x\in\textnormal{SU}(2).
\end{equation} Now, by using Theorem \ref{FIOSGcardona}, $F$ is $r$-nuclear, $0<r\leq 1,$ if and only if,  the symbol $a(\cdot,\cdot)$  admits a decomposition of the form
\begin{equation}
a(x,\ell)=\Phi(x,\ell)^{-1}\sum_{k=1}^{\infty}h_{k}(x)\overline{(\widehat{\overline{g}}_k)(\ell)},\,\,\,a.e.w.,\,\,(x,\ell),
\end{equation} where   $\{g_k\}_{k\in\mathbb{N}}$ and $\{h_k\}_{k\in\mathbb{N}}$ are sequences of functions satisfying 
\begin{equation}
\sum_{k=0}^{\infty}\Vert g_k\Vert^r_{L^{p_1'}}\Vert h_{k}\Vert^r_{L^{p_2}}<\infty.
\end{equation} The last condition have been proved for pseudo-differential operators in \cite{Ghaemi2} on arbitrary Hausdorff and compact groups. In this case, in an analogous expression to the presented above for $\mathbb{R}^n,$ $\mathbb{Z}^n,$ and $\mathbb{T}^n,$ the nuclear trace of $F$ can be written as
\begin{equation}
\textnormal{Tr}(F)=\int\limits_{\textnormal{SU}(2)}\sum_{\ell\in \frac{1}{2}\mathbb{N}_0}(2\ell+1)\textnormal{Tr}[t_\ell(A)^{*}\Phi(A,\ell)a(A,\ell)]dA.
\end{equation} By using the diffeomorphism 
$\varrho:\textnormal{SU}(2)\rightarrow \mathbb{S}^3,$ defined by
\begin{equation}\varrho(A)=x:=(x_1,x_2,x_3,x_4),\,\,\,\textnormal{for}\,\,\,\,
A=\begin{bmatrix}
    x_1+ix_2       & x_3+ix_4  \\
    -x_3+ix_4       & x_1-ix_2 
\end{bmatrix}, 
\end{equation} we have
\begin{align*}
\textnormal{Tr}(F)&=\int\limits_{\textnormal{SU}(2)}\sum_{\ell\in \frac{1}{2}\mathbb{N}_0}(2\ell+1)\textnormal{Tr}[t_\ell(A)^{*}\Phi(A,\ell)a(A,\ell)]dA\\
&=\int\limits_{ \mathbb{S}^3  }\sum_{\ell\in \frac{1}{2}\mathbb{N}_0}(2\ell+1)\textnormal{Tr}[t_\ell(\varrho^{-1}(x))^{*}\Phi( \varrho^{-1}(x)  ,\ell)a(  \varrho^{-1}(x),\ell)]d\sigma(x)\\
&=\int\limits_{ \mathbb{S}^3  }\sum_{\ell\in \frac{1}{2}\mathbb{N}_0}(2\ell+1)\textnormal{Tr}[t_\ell(x)^{*}\Phi( x  ,\ell)a(  x,\ell)]d\sigma(x),
\end{align*}where $t_\ell(\varrho^{-1}(x))=:t_\ell(x),$  $\Phi( \varrho^{-1}(x)  ,\ell)=:\Phi( x  ,\ell),$ $a(  \varrho^{-1}(x),\ell)=:a(  x,\ell)$ and    $d\sigma(x)$ denotes the surface measure on $\mathbb{S}^3.$ If we consider the parametrization of $\mathbb{S}^3$ defined by $x_1:=\cos(\frac{t}{2}),$ $x_2:=\nu,$ $x_3:=(\sin^2(\frac{t}{2})-\nu^2)^{\frac{1}{2}}\cos(s),$ $x_4:=(\sin^2(\frac{t}{2})-\nu^2)^{\frac{1}{2}}\sin(s),$ where
$$(t,\nu,s)\in D:=\{(t,\nu,s)\in\mathbb{R}^3:|\nu|\leq \sin(\frac{t}{2}),\,0\leq t,s\leq 2\pi\},$$
\end{example}then $d\sigma(x)=\sin(\frac{t}{2})dtd\nu ds,$ and 
\begin{align*}
&\textnormal{Tr}(F)\\
&=\int\limits_{0}^{2\pi}\int\limits_{0}^{2\pi}\int\limits_{  -\sin(t/2) }^{ \sin(t/2)   }\sum_{\ell\in \frac{1}{2}\mathbb{N}_0}(2\ell+1)\textnormal{Tr}[t_\ell(t,\nu,s)^{*}\Phi( (t,\nu,s)  ,\ell)a(  (t,\nu,s),\ell)]\sin(\frac{t}{2})d\nu dtds.
\end{align*}

\section{Nuclear Fourier integral operators on compact homogeneous manifolds}\label{FIOcompacthomomani}
The main goal in this section is to provide a characterization for the nuclearity of Fourier integral operators on compact homogeneous manifolds $M\cong G/K$.  Taking into account that the Peter-Weyl decompositions of $L^2(M)$ and $L^2(\mathbb{G})$ (where  $\mathbb{G}$ is a Hausdorff and compact group) have an analogue structure, we classify the nuclearity of FIOs on compact homogeneous manifolds by adapting to our case, the proof of Theorem 2.2 in \cite{Ghaemi2} where were classified those nuclear pseudo-differential operators in    $L^p(\mathbb{G})$-spaces.
\subsection{Global FIOs on compact homogeneous manifolds}
In order to present our definition for Fourier integral operators on compact homogeneous spaces, we recall some definitions on the subject. Compact homogeneous manifolds can be obtained if we consider the quotient space of a compact Lie groups $G$ with one of its closed subgroups  $K$\,\, --there exists an unique differential structure for the quotient $M:=G/K$--. Examples of compact homogeneous spaces are spheres $\mathbb{S}^n\cong \textnormal{SO}(n+1)/\textnormal{SO}(n),$ real projective spaces $\mathbb{RP}^n\cong \textnormal{SO}(n+1)/\textnormal{O}(n),$ complex projective spaces $\mathbb{CP}^n\cong\textnormal{SU}(n+1)/\textnormal{SU}(1)\times\textnormal{SU}(n)$ and more generally Grassmannians $\textnormal{Gr}(r,n)\cong\textnormal{O}(n)/\textnormal{O}(n-r)\times \textnormal{O}(r).$

Let us denote by $\widehat{G}_0$ the subset of $\widehat{G},$  of representations in $G$, that are of class I with respect to the subgroup $K$. This means that $\pi\in \widehat{G}_0$ if there exists at least one non trivial invariant vector $a$ with respect to $K,$ i.e., $\pi(h)a=a$ for every $h\in K.$ Let us denote by $B_{\pi}$ to the vector space of these invariant vectors and $k_{\pi}=\dim B_{\pi}.$ Now we follow the notion of Multipliers as in \cite{RR}. Let us consider the class of symbols $\Sigma(M),$ for $M=G/K,$ consisting of those matrix-valued functions \begin{equation}
\sigma:\widehat{G}_0\rightarrow\bigcup_{n=1}^{\infty}\mathbb{C}^{n\times n}\,\,\text{ such that }\,\,\sigma(\pi)_{ij}=0\textnormal{ for all } i,j>k_{\pi}.
\end{equation}
Following \cite{RR}, a Fourier multiplier $A$ on $M$ is a bounded operator on $L^2(M)$ such that for some $\sigma_{A}\in \Sigma(M)$ satisfies
\begin{equation}\label{Eq.MultFourHomg}
Af(x)=\sum_{\pi\in\widehat{G}_0}d_{\pi}\textnormal{Tr}(\pi(x)\sigma_{A}(\pi)\widehat{f}(\pi)), \,\,\text{ for }\,f\in C^{\infty}(M),
\end{equation}
where $\widehat{f}$ denotes the Fourier transform of the lifting $\dot{f}\in C^{\infty}(G)$ of $f$ to $G,$ given by $\dot{f}(x):=f(xK),$ $x\in G.$
\begin{remark}
For every symbols of a Fourier multipliers $A$ on $M,$ only the upper-left block in $\sigma_{A}(\pi)$ of the size $k_\pi\times k_\pi$ cannot be  the  trivial matrix zero. 
\end{remark} Now, if we consider a phase function $\Phi:M\times \widehat{G}_0\rightarrow \cup_{[\pi]\in \widehat{G}_0}\textnormal{GL}(d_\pi),$ and a distribution $a: M\times \widehat{G}_0\rightarrow \cup_{[\pi]\in \widehat{G}_0}\textnormal{GL}(d_\pi), $ the Fourier integral operator associated to $\Phi$ and to $a(\cdot,\cdot)$ is given by
\begin{equation}\label{cardonadefiunition}
F\varphi(x)=\sum_{\pi\in\widehat{G}_0}d_{\pi}\textnormal{Tr}(\Phi(x,\pi)a(x,\pi)\widehat{\varphi}(\pi)), \,\,\text{ for }\,\varphi\in C^{\infty}(M).
\end{equation}
We additionally require the condition $\sigma(x,\pi)_{ij}=0$ for $i,j>k_\pi$ for the distributional symbols considered above.  Now, if we want to characterize those $r$-nuclear FIOs we only need to follow the proof of Theorem 2.2 in \cite{Ghaemi2} where  the nuclearity of pseudo-differential operators was characterized on compact and Hausdorff groups.   Because the set $$\{\sqrt[2]{d_\pi}\pi_{ij}:1\leq i,j\leq k_\pi\}$$ provides
an orthonormal basis of $L^2(M),$ we have the relation
\begin{equation}
\int_{M}\pi_{nm}(x)\overline{\varkappa_{ij}(x)}dx=\frac{1}{d_\pi}\delta_{\pi\varkappa}\delta_{ni}\delta_{mj},\,\,\,[\pi],[\varkappa]\in \widehat{G}_0.
\end{equation} If we assume that $F:L^{p_1}(M)\rightarrow L^{p_2}(M)$ is $r$-nuclear, then we have a nuclear decomposition for its kernel, i.e., there exist sequences $h_{k}$ in $L^{p_2}$ and $g_k$ in $L^{p_1'}$ satisfying
\begin{equation}
Ff(x)=\int_{M}\left(\sum_{k=1}^{\infty}h_k(x)g_{k}(y)\right)f(y)dy,\,\, f\in L^{p_1}(M),
\end{equation} with 
\begin{equation}\label{deco2222222}
\sum_{k=0}^{\infty}\Vert g_k\Vert^r_{L^{p_1'}}\Vert h_{k}\Vert^r_{L^{p_2}}<\infty.
\end{equation}
So, we have  with $1\leq n,m\leq k_\pi,$
\begin{align*}
&F\pi_{nm}(x)\\
&=(\Phi(x,\pi)a(x,\pi))_{nm}=\int_{M}\left(\sum_{k=1}^{\infty}h_k(x)g_{k}(y)\right)\pi_{nm}(x)dx=\sum_{k=1}^{\infty}h_k(x)\overline{\widehat{ \overline{ g_k } } (\pi)}_{mn}.
\end{align*}
Consequently, if $B^t$ denotes the transpose of a matrix $B$,  we obtain
\begin{equation}
\Phi(x,\pi)a(x,\pi)=\sum_{k=1}^{\infty}h_k(x)\overline{\widehat{ \overline{ g_k } } (\pi)}^{t}=\sum_{k=1}^{\infty}h_k(x)\widehat{ \overline{ g_k } } (\pi)^*,
\end{equation}
and by considering that $\Phi(x,\pi)\in \textnormal{GL}(d_\pi)$ for every $x\in M,$ we deduce the equivalent condition,
\begin{equation}\label{M}
a(x,\pi)=\Phi(x,\pi)^{-1}\sum_{k=1}^{\infty}h_k(x)\widehat{ \overline{ g_k } } (\pi)^*.
\end{equation} On the other hand,  if we assume that the symbol $a(\cdot,\cdot)$ satisfies the condition \eqref{M} with $
\sum_{k=0}^{\infty}\Vert g_k\Vert^r_{L^{p_1'}}\Vert h_{k}\Vert^r_{L^{p_2}}<\infty, 
$ from the definition of Fourier integral operator we can write for $\varphi\in L^{p_1}(M),$
\begin{align*}
F\varphi(x) &=\sum_{\pi\in\widehat{G}_0}d_{\pi}\textnormal{Tr}(\Phi(x,\pi)a(x,\pi)\widehat{\varphi}(\pi))= \sum_{\pi\in\widehat{G}_0}d_{\pi}\textnormal{Tr}(\sum_{k=1}^{\infty}h_k(x) \widehat{ \overline{ g_k } } (\pi)^*\widehat{\varphi}(\pi))\\
& = \sum_{\pi\in\widehat{G}_0}d_{\pi}\textnormal{Tr}(\sum_{k=1}^{\infty}h_k(x)\int_{M}g_k(y)\pi(y)dy \widehat{\varphi}(\pi))\\
& = \int_{M}\sum_{k=1}^{\infty}h_k(x) g_k(y)\sum_{\pi\in\widehat{G}_0}d_{\pi}\textnormal{Tr}(\pi(y) \widehat{\varphi}(\pi))dy\\
&=\int_{M}\sum_{k=1}^{\infty}h_k(x) g_k(y)\varphi(y)dy.\\
\end{align*}
Newly, by Delgado's Theorem we obtain the $r$-nuclearity of $F.$ So, our adaptation of the proof of Theorem 2.2 in \cite{Ghaemi2}, to our case of FIOs on compact manifolds leads to the following result.
\begin{theorem}\label{compactmanifoldcardona}
Let us assume $M\cong G/K$ be a homogeneous manifold, $0<r\leq 1,$ $1\leq p_1,p_2<\infty$ and let $F$ be a Fourier integral operator as in \eqref{cardonadefiunition}. Then, $F:L^{p_1}(M)\rightarrow L^{p_2}(M)$ is $r$-nuclear if and only if,  there exist sequences $h_{k}$ in $L^{p_2}$ and $g_k$ in $L^{p_1'}$ satisfying
\begin{equation}
a(x,\pi)=\Phi(x,\pi)^{-1}\sum_{k=1}^{\infty}h_k(x)\widehat{ \overline{ g_k } } (\pi)^*,\,\,x\in G,[\pi]\in \widehat{G}_0,
\end{equation} with 
\begin{equation}\label{deco2222222111}
\sum_{k=0}^{\infty}\Vert g_k\Vert^r_{L^{p_1'}}\Vert h_{k}\Vert^r_{L^{p_2}}<\infty.
\end{equation}
\end{theorem}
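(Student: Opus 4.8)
The plan is to deduce the equivalence from Delgado's criterion (Theorem \ref{Theorem1}) by moving back and forth between the integral kernel of $F$ and its matrix-valued symbol, in the spirit of the proof of Theorem \ref{Ghaemi2}; the only genuinely new feature compared with the pseudo-differential case is the invertible phase factor $\Phi(x,\pi)\in\textnormal{GL}(d_\pi)$, which at the end is simply inverted. Since $M$ is compact, every matrix coefficient $\pi_{nm}$ is smooth and therefore lies in $L^{p_1}(M)=\textnormal{Dom}(F)$, so these functions are legitimate inputs for $F$.

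For necessity, I would assume $F:L^{p_1}(M)\rightarrow L^{p_2}(M)$ is $r$-nuclear and invoke Theorem \ref{Theorem1} to obtain sequences $(h_k)$ in $L^{p_2}(M)$ and $(g_k)$ in $L^{p_1'}(M)$ with $\sum_k\|g_k\|_{L^{p_1'}}^r\|h_k\|_{L^{p_2}}^r<\infty$ and kernel $K(x,y)=\sum_k h_k(x)g_k(y)\in L^1(M\times M)$. The core computation is to evaluate $F$ on the Peter--Weyl coefficients $\pi_{nm}$ with $1\le n,m\le k_\pi$. Using the definition \eqref{cardonadefiunition} and the reproducing property of $\widehat{\pi_{nm}}$, one side gives $F\pi_{nm}(x)=(\Phi(x,\pi)a(x,\pi))_{nm}$; using the kernel representation and the orthogonality relations for matrix coefficients, the other side gives $F\pi_{nm}(x)=\sum_k h_k(x)\,\overline{\widehat{\overline{g_k}}(\pi)}_{mn}$. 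Equating the two entrywise and reassembling the matrix yields $\Phi(x,\pi)a(x,\pi)=\sum_k h_k(x)\widehat{\overline{g_k}}(\pi)^{*}$, and left multiplication by $\Phi(x,\pi)^{-1}$ produces the asserted decomposition.

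For sufficiency, I would insert the assumed decomposition into \eqref{cardonadefiunition}; the cancellation $\Phi(x,\pi)\Phi(x,\pi)^{-1}=I$ leaves $F\varphi(x)=\sum_{\pi}d_\pi\textnormal{Tr}\bigl(\sum_k h_k(x)\widehat{\overline{g_k}}(\pi)^{*}\widehat{\varphi}(\pi)\bigr)$. Rewriting $\widehat{\overline{g_k}}(\pi)^{*}=\int_M g_k(y)\pi(y)\,dy$, interchanging the absolutely convergent sum over $k$ with the integral, and applying the Fourier inversion formula $\varphi(y)=\sum_\pi d_\pi\textnormal{Tr}(\pi(y)\widehat{\varphi}(\pi))$ collapses the expression to $F\varphi(x)=\int_M\bigl(\sum_k h_k(x)g_k(y)\bigr)\varphi(y)\,dy$. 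Since the summability condition is preserved, Theorem \ref{Theorem1} then certifies that $F$ is $r$-nuclear.

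The step I expect to be the main obstacle is the kernel-to-symbol passage in the necessity half: one must check that testing against the finitely many coefficients $\pi_{nm}$ with $n,m\le k_\pi$ recovers the whole block-constrained symbol (this is exactly where the constraint $\sigma(x,\pi)_{ij}=0$ for $i,j>k_\pi$ enters), and that exchanging $\sum_k$ with $\int_M$ is justified, which rests on $K\in L^1(M\times M)$ via Fubini--Tonelli. The only delicate bookkeeping is tracking the conjugations and transposes relating $\int_M g_k(y)\pi_{nm}(y)\,dy$ to the entries of $\widehat{\overline{g_k}}(\pi)^{*}$; everything else is a formal rearrangement underwritten by the compactness of $M$ and the pointwise invertibility of $\Phi$.
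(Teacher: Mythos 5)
Your proposal follows essentially the same route as the paper: both halves proceed exactly as in the paper's argument, namely testing the nuclear kernel against the Peter--Weyl coefficients $\pi_{nm}$, $1\le n,m\le k_\pi$, using the orthogonality relations to identify $\Phi(x,\pi)a(x,\pi)=\sum_k h_k(x)\widehat{\overline{g_k}}(\pi)^{*}$ and then inverting $\Phi(x,\pi)$, and conversely substituting the decomposition into \eqref{cardonadefiunition} and collapsing via Fourier inversion to the kernel form required by Theorem \ref{Theorem1}. The paper likewise presents this as an adaptation of the proof of Theorem \ref{Ghaemi2}, so no substantive difference to report.
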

Now, we will prove that the previous (abstract) characterization can be applied in order to measure the decaying of symbols in the momentum  variables. So, we will use the following formulation of Lebesgue spaces on $\widehat{G}_0:$
\begin{eqnarray}
\mathcal{M}\in \ell^p(\widehat{G}_0)\Longleftrightarrow \Vert \mathcal{M} \Vert_{\ell^p(\widehat{G}_0)}=\left( \sum_{[\pi]\in \widehat{G}_0}d_\pi k_\pi^{p(\frac{1}{p}-\frac{1}{2})}\Vert \mathcal{M}(\pi)\Vert^{p}_{\textnormal{HS}}\right)^{\frac{1}{p}}<\infty,
\end{eqnarray}
for $1\leq p<\infty.$

\begin{theorem}\label{compactmanifoldcardona2222}
Let us assume $M\cong G/K$ be a homogeneous manifold, $2\leq p_1<\infty, $ $1\leq p_2<\infty$ and let $F$ be a Fourier integral operator as in \eqref{cardonadefiunition}. If $F:L^{p_1}(M)\rightarrow L^{p_2}(M)$ is nuclear, 
then $a(x,\pi)\in  \ell^{p_1}_\pi L^{p_2}_x(M\times \widehat{G}_0) ,$ this means that
 \begin{equation}
     \Vert a(x,\pi)\Vert_{ \ell^{p_1}_\pi L^{p_2}_x(M\times \widehat{G}_0)}:=\left(\int\limits_{M} \left(\sum\limits_{[\pi]\in \widehat{G}_0}d_{\pi}k_{\pi}^{p_1(\frac{1}{p_1}-\frac{1}{2})}\Vert a(x,\pi)\Vert^{p_1}_{\textnormal{HS} }\right)^{\frac{p_2}{p_1}}dx\right)^{\frac{1}{p_2}}<\infty,
 \end{equation}
\end{theorem}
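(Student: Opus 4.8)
The plan is to adapt the proof of Theorem~\ref{decaying} to the manifold setting, the only genuinely new ingredient being a Hausdorff-Young inequality on $M=G/K$. Since $F$ is nuclear, Theorem~\ref{compactmanifoldcardona} (with $r=1$) provides sequences $(h_k)$ in $L^{p_2}(M)$ and $(g_k)$ in $L^{p_1'}(M)$ such that
\begin{equation*}
a(x,\pi)=\Phi(x,\pi)^{-1}\sum_{k=1}^{\infty}h_k(x)\widehat{\overline{g_k}}(\pi)^*,\qquad \sum_{k=1}^{\infty}\Vert h_k\Vert_{L^{p_2}}\Vert g_k\Vert_{L^{p_1'}}<\infty.
\end{equation*}
The objective is to dominate $\Vert a\Vert_{\ell^{p_1}_\pi L^{p_2}_x}$ by this convergent series.

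First I would fix $x$ and bound the inner $\ell^{p_1}(\widehat{G}_0)$-norm. Using that $\Phi(x,\pi)^{-1}$ is unitary (hence isometric for the Hilbert-Schmidt norm at each $\pi$, exactly as $e^{-i\phi}$ has modulus one in Theorem~\ref{decaying}) and that $B\mapsto B^*$ preserves the Hilbert-Schmidt norm, the triangle inequality gives $\Vert a(x,\pi)\Vert_{\textnormal{HS}}\le\sum_k|h_k(x)|\,\Vert\widehat{\overline{g_k}}(\pi)\Vert_{\textnormal{HS}}$. Minkowski's inequality in $\ell^{p_1}(\widehat{G}_0)$ (available since $p_1\ge1$) then yields $\Vert a(x,\cdot)\Vert_{\ell^{p_1}_\pi}\le\sum_k|h_k(x)|\,\Vert\widehat{\overline{g_k}}\Vert_{\ell^{p_1}(\widehat{G}_0)}$.

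The key step, and the main obstacle, is the Hausdorff-Young inequality on $M$: because $2\le p_1<\infty$ forces $1<p_1'\le2$, I claim $\Vert\widehat{\overline{g_k}}\Vert_{\ell^{p_1}(\widehat{G}_0)}\le\Vert\overline{g_k}\Vert_{L^{p_1'}(M)}=\Vert g_k\Vert_{L^{p_1'}(M)}$. One must verify that the Fourier analysis on $G/K$, restricted to the class-I dual $\widehat{G}_0$ with the multiplicities $k_\pi$ governing the weight $k_\pi^{p(1/p-1/2)}$ in $\Vert\cdot\Vert_{\ell^p(\widehat{G}_0)}$, still satisfies Hausdorff-Young; this is obtained by interpolating between the Plancherel identity at $p_1'=2$ (coming from the orthonormal basis $\{\sqrt{d_\pi}\,\pi_{ij}:1\le i,j\le k_\pi\}$ of $L^2(M)$) and the elementary $L^1$-$\ell^\infty$ bound. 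Granting this, I would insert the estimate and take the $L^{p_2}_x$-norm; a final application of Minkowski's inequality in $L^{p_2}(M)$ gives $\Vert a\Vert_{\ell^{p_1}_\pi L^{p_2}_x}\le\sum_k\Vert h_k\Vert_{L^{p_2}}\Vert g_k\Vert_{L^{p_1'}}<\infty$, which is precisely the series furnished by nuclearity, completing the proof.
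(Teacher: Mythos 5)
Your overall strategy coincides with the paper's: invoke Theorem \ref{compactmanifoldcardona} with $r=1$ to get the decomposition $a(x,\pi)=\Phi(x,\pi)^{-1}\sum_k h_k(x)\widehat{\overline{g_k}}(\pi)^*$, estimate the inner $\ell^{p_1}(\widehat{G}_0)$-norm by the triangle inequality, apply the Hausdorff--Young inequality $\Vert\widehat{\overline{g_k}}\Vert_{\ell^{p_1}(\widehat{G}_0)}\leq\Vert g_k\Vert_{L^{p_1'}(M)}$ on $M=G/K$, and then take the $L^{p_2}_x$-norm. Those steps are exactly what the paper does.

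There is, however, one genuine gap: you assert that $\Phi(x,\pi)^{-1}$ is unitary, ``exactly as $e^{-i\phi}$ has modulus one.'' Nothing in the setup gives you that. The phase $\Phi$ is only assumed to be a measurable map into $\bigcup_{[\pi]}\textnormal{GL}(d_\pi)$, not into the unitary groups, so multiplication by $\Phi(x,\pi)^{-1}$ need not preserve the Hilbert--Schmidt norm, and without some control on $\Phi^{-1}$ the estimate $\Vert a(x,\pi)\Vert_{\textnormal{HS}}\le\sum_k|h_k(x)|\Vert\widehat{\overline{g_k}}(\pi)\Vert_{\textnormal{HS}}$ simply fails. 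This is precisely why the theorem carries the additional proviso
\begin{equation*}
\Vert \Phi^{-1}\Vert_{\infty}:=\sup_{(x,[\pi])\in M\times \widehat{G}_0}\Vert \Phi(x,\pi)^{-1} \Vert_{op}<\infty,
\end{equation*}
which the paper uses through the inequality $\Vert \Phi(x,\pi)^{-1}B\Vert_{\textnormal{HS}}\leq\Vert\Phi(x,\pi)^{-1}\Vert_{op}\Vert B\Vert_{\textnormal{HS}}$, yielding the bound $\Vert a(x,\cdot)\Vert_{\ell^{p_1}_\pi}\leq\Vert\Phi^{-1}\Vert_{\infty}\sum_k|h_k(x)|\,\Vert\widehat{\overline{g_k}}\Vert_{\ell^{p_1}_\pi}$ with the extra constant $\Vert\Phi^{-1}\Vert_{\infty}$. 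If you replace your unitarity claim by this hypothesis and this submultiplicativity estimate, the rest of your argument (Minkowski in $\ell^{p_1}$, Hausdorff--Young via interpolation between Plancherel and the $L^1$--$\ell^\infty$ endpoint, then Minkowski in $L^{p_2}_x$) matches the paper's proof and closes the argument.
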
provided that
\begin{eqnarray}
\Vert \Phi^{-1}\Vert_{\infty}:=\sup_{(x,[\pi])\in M\times \widehat{G}_0}\Vert \Phi(x,\pi)^{-1} \Vert_{op}<\infty.
\end{eqnarray}
\begin{proof}
Let $2\leq p_1<\infty,$   $1\leq p_2<\infty,$  and let $F$ be the Fourier integral operator associated to $a(\cdot,\cdot).$ If $F:L^{p_1}(M)\rightarrow L^{p_2}(M)$ is nuclear, then Theorem \ref{compactmanifoldcardona} guarantees the decomposition
\begin{equation}
a(x,\pi)=\Phi(x,\pi)^{-1}\sum_{k=1}^{\infty}h_k(x)\widehat{ \overline{ g_k } } (\pi)^*,\,\,x\in G,[\pi]\in \widehat{G}_0,
\end{equation} with 
\begin{equation}
\sum_{k=0}^{\infty}\Vert g_k\Vert^r_{L^{p_1'}}\Vert h_{k}\Vert^r_{L^{p_2}}<\infty.
\end{equation} So, if we take the $\ell^{p_1}_\pi$-norm, we have,
\begin{align*}
\Vert a(x,\pi) \Vert_{\ell^{p_1}_\pi} &= \left\Vert \Phi(x,\pi)^{-1}\sum_{k=1}^{\infty}h_k(x)\widehat{ \overline{ g_k } } (\pi)^*  \right\Vert_{\ell^{p_1}_\pi}\\
&=\left\Vert\sum_{k=1}^{\infty}h_k(x)\Phi(x,\pi)^{-1}\widehat{ \overline{ g_k } } (\pi)^*  \right\Vert_{L^{p_1}_\pi}\\
&\leq \sum_{k=1}^{\infty}\vert h_{k}(x)\vert \Vert \Phi(x,\pi)^{-1} \widehat{{ \overline{ g_k } }} (\pi)^*\Vert_{\ell^{p_1}_\pi}.
\end{align*} 
By the definition of $\ell^{p_1}_\pi$-norm, we have
\begin{align*}
    \Vert \Phi(x,\pi)^{-1}{ \overline{ g_k } } (\pi)^* \Vert_{\ell^{p_1}(\widehat{G}_0)} &=\left( \sum_{[\pi]\in \widehat{G}_0}d_\pi k_\pi^{p_1(\frac{1}{p_1}-\frac{1}{2})}\Vert \Phi(x,\pi)^{-1}   { \widehat{ \overline{ g_k } }} (\pi)^*\Vert^{p_1}_{\textnormal{HS}}\right)^{\frac{1}{p_1}}\\
    &\leq \left( \sum_{[\pi]\in \widehat{G}_0}d_\pi k_\pi^{p_1(\frac{1}{p_1}-\frac{1}{2})}\Vert \Phi(x,\pi)^{-1}\Vert_{op}\Vert  { \widehat{ \overline{ g_k } } } (\pi)^*\Vert^{p_1}_{\textnormal{HS}}\right)^{\frac{1}{p_1}}\\
    &\leq \Vert \Phi^{-1}\Vert_{\infty}\left( \sum_{[\pi]\in \widehat{G}_0}d_\pi k_\pi^{p_1(\frac{1}{p_1}-\frac{1}{2})}\Vert  { \widehat{  \overline{ g_k }} } (\pi)^*\Vert^{p_1}_{\textnormal{HS}}\right)^{\frac{1}{p_1}}\\
    &= \Vert \Phi^{-1}\Vert_{\infty}\left( \sum_{[\pi]\in \widehat{G}_0}d_\pi k_\pi^{p_1(\frac{1}{p_1}-\frac{1}{2})}\Vert  \widehat{  \overline{ g_k } } (\pi)\Vert^{p_1}_{\textnormal{HS}}\right)^{\frac{1}{p_1}}.
\end{align*}Consequently,
\begin{eqnarray}
\Vert a(x,\pi) \Vert_{\ell^{p_1}_\pi} \leq \Vert \Phi^{-1}\Vert_{\infty} \sum_{k=1}^{\infty}\vert h_{k}(x)\vert \Vert \widehat{  \overline{ g_k } } (\pi)\Vert_{\ell^{p_1}_\pi}. 
\end{eqnarray}

Now, if we use the Hausdorff-Young inequality, we deduce,  $\Vert \widehat{  \overline{ g_k } } (\pi)\Vert_{\ell^{p_1}_\pi}\leq \Vert  {g}_k\Vert_{L^{p_1'}}.$
Consequently,
\begin{align*}
    \Vert a(x,\pi)\Vert_{ \ell^{p_1}_\pi L^{p_2}_x(M\times \widehat{G}_0)}&=\left(\int\limits_{M} \left(\sum\limits_{[\pi]\in \widehat{G}_0}d_{\pi}k_{\pi}^{p_1(\frac{1}{p_1}-\frac{1}{2})}\Vert a(x,\pi)\Vert^{p_1}_{\textnormal{HS} }\right)^{\frac{p_2}{p_1}}dx\right)^{\frac{1}{p_2}}\\
    &\leq \Phi^{-1}\Vert_{\infty}\left\Vert  \sum_{k=1}^{\infty}\vert h_{k}(x)\vert \Vert \widehat{  \overline{ g_k } } (\pi)\Vert_{\ell^{p_1}_\pi} \right\Vert_{L^{p_2}_x} \\
     &\leq \sum_{k=1}^{\infty}\Vert h_{k}\Vert_{L^{p_2}}\Vert{g}_k\Vert_{L^{p_1'}}<\infty.
\end{align*}
Thus, we finish the proof.
\end{proof}

\begin{remark}
If $K={e_G}$ and $M=G$ is a compact Lie group, the condition
\begin{eqnarray}
\Vert \Phi^{-1}\Vert_{\infty}:=\sup_{(x,[\pi])\in M\times \widehat{G}_0}\Vert \Phi(x,\pi)^{-1} \Vert_{op}=\sup_{(x,[\xi])\in G\times \widehat{G}}\Vert \Phi(x,\xi)^{-1} \Vert_{op}<\infty,
\end{eqnarray} arises naturally in the context of pseudo-differential operators. Indeed, if we take $\Phi(x,\xi)=\xi(x),$ then $\Phi(x,\xi)^{-1}=\xi(x)^*,$ and 
\begin{eqnarray}
\sup_{(x,[\xi])\in G\times \widehat{G}}\Vert \xi(x)^{*} \Vert_{op}=1.
\end{eqnarray} 
\end{remark}

\begin{remark}\label{remarkcompact}
As a consequence of Delgado's theorem, if $F:L^{p}(M)\rightarrow L^{p}(M),$ $M\cong G/K,$ $1\leq p<\infty,$ is $r$-nuclear, its nuclear trace is given by
\begin{equation}
\textnormal{Tr}(F)=\int\limits_{M}\sum_{[\pi]\in \widehat{G}_0}d_\pi\textnormal{Tr}[\pi(x)^{*}\Phi(x,\pi)a(x,\pi)]dx.
\end{equation}
\end{remark}
\begin{example}[The complex projective plane $\mathbb{C}\mathbb{P}^2$] A point $\ell\in \mathbb{C}\mathbb{P}^n$ (the $n$-dimensional complex projective space) is a complex line through the origin in $\mathbb{C}^{n+1}.$ For every $n,$ $\mathbb{C}\mathbb{P}^n\cong \textnormal{SU}(n+1)/\textnormal{SU}(1)\times\textnormal{SU}(n).$  We will use the representation theory of $\textnormal{SU}(3)$ in order to describe the nuclear trace of Fourier integral operators on $\mathbb{C}\mathbb{P}^2\cong\textnormal{SU}(3)/\textnormal{SU}(1)\times\textnormal{SU}(2).$ The Lie group $\textnormal{SU}(3)$ (see \cite{Fe1}) has dimension 8 and $3$ positive square roots $\alpha,\beta$ and $\rho$ with the property
\begin{equation}
\rho=\frac{1}{2}(\alpha+\beta+\rho). 
\end{equation}
We define the weights
\begin{equation}
\sigma=\frac{2}{2}\alpha+\frac{1}{3}\beta,\,\,\,\tau=\frac{1}{3}\alpha+\frac{2}{3}\beta.
\end{equation}
With the notations above the unitary dual of $\textnormal{SU}(3)$ can be identified with
\begin{equation}
\widehat{\textnormal{SU}}(3)\cong\{\lambda:=\lambda(a,b)=a\sigma+b\tau:a,b\in\mathbb{N}_{0}, \}.
\end{equation}
In fact, every representation $\pi=\pi_{\lambda(a,b)}$ has highest weight $\lambda=\lambda(a,b)$ for some $(a,b)\in\mathbb{N}_0^2.$ In this case $d_{{\lambda(a,b)}}:=d_{\pi_{\lambda(a,b)}}=\frac{1}{2}(a+1)(b+1)(a+b+2).$ For $G=\textnormal{SU}(3)$ and $K=\textnormal{SU}(1)\times\textnormal{SU}(2),$ let us define
\begin{equation}
\mathbb{N}_{00}^2:=\{(a,b)\in\mathbb{N}_0\times \mathbb{N}_0:\pi_{\lambda(a,b)}\in \widehat{G}_0=\widehat{\textnormal{SU}}(3)_0\}.
\end{equation} Now, let us  consider a phase function $\Phi:\mathbb{C}\mathbb{P}^2\times \mathbb{N}_{00}^2\rightarrow \cup_{(a,b)\in \mathbb{N}_0\times  \mathbb{N}_0  }\textnormal{GL}(d_{\lambda(a,b)}),$  a distribution $a:\mathbb{C}\mathbb{P}^2\times \mathbb{N}_{00}^2\rightarrow \cup_{(a,b)\in \mathbb{N}\times  \mathbb{N}  }\textnormal{GL}(d_{\lambda(a,b)}), $ and the Fourier integral operator $F$ associated to $\Phi$ and to $a(\cdot,\cdot):$ 
\begin{equation}\label{cardonadefiunitionCP2}
F\varphi(\ell)=\sum_{  (a,b)\in\mathbb{N}_{00}^2 } \frac{1}{2}(a+1)(b+1)(a+b+2) \textnormal{Tr}(\Phi(\ell,(a,b))a(\ell,(a,b))\widehat{\varphi}(\pi_{\lambda(a,b)})), 
\end{equation}
where $\varphi\in C^{\infty}(\mathbb{C}\mathbb{P}^2).$ We additionally require the condition $\sigma(\ell,(a,b))_{ij}=0$ for $\ell\in \mathbb{C}\mathbb{P}^2,  $ and $i,j>k_{\pi_{\lambda(a,b)}},$ for those distributional symbols considered above. As a consequence of Remark \ref{remarkcompact}, if $F:L^{p}(\mathbb{C}\mathbb{P}^2)\rightarrow L^{p}(\mathbb{C}\mathbb{P}^2),$  $1\leq p<\infty,$ is $r$-nuclear, its nuclear trace is given by
\begin{equation}
\textnormal{Tr}(F)
=\int\limits_{\mathbb{C}\mathbb{P}^2}\sum_{ (a,b)\in\mathbb{N}_{00}^2  }(a+1)(b+1)(a+b+2) \textnormal{Tr}[\pi_{\lambda(a,b)}(\ell)^{*}\Phi(\ell,(a,b))a(\ell,(a,b))]\frac{d\ell}{2}.
\end{equation}
If $\vartheta:\mathbb{C}\mathbb{P}^2\rightarrow \textnormal{SU}(3)/\textnormal{SU}(1)\times \textnormal{SU}(2)$ is a diffeomorphism and $K=\textnormal{SU}(1)\times \textnormal{SU}(2),$ then
\begin{align*}
&\int\limits_{\mathbb{C}\mathbb{P}^2}\sum_{ (a,b)\in\mathbb{N}_{00}^2  }(a+1)(b+1)(a+b+2) \textnormal{Tr}[\pi_{\lambda(a,b)}(\ell)^{*}\Phi(\ell,(a,b))a(\ell,(a,b))]\frac{d\ell}{2}\\
&=\int\limits_{\textnormal{SU}(3)/\textnormal{SU}(1)\times \textnormal{SU}(2)}\sum_{ (a,b)\in\mathbb{N}_{00}^2  }(a+1)(b+1)(a+b+2) \\
&\hspace{5cm}\textnormal{Tr}[\pi_{\lambda(a,b)}(\vartheta^{-1}(g))^{*}\Phi(\vartheta^{-1}(g),(a,b))a(\vartheta^{-1}(g),(a,b))]\frac{dg}{2} \\
&=\int\limits_{\textnormal{SU}(3)/\textnormal{SU}(1)\times \textnormal{SU}(2)}\sum_{ (a,b)\in\mathbb{N}_{00}^2  }(a+1)(b+1)(a+b+2) \\
&\hspace{5cm}\textnormal{Tr}[\pi_{\lambda(a,b)}(g)^{*}\Phi(g,(a,b))a(g,(a,b))]\frac{dg}{2}\\
&=\int\limits_{\textnormal{SU}(3)}\sum_{ (a,b)\in\mathbb{N}_{00}^2  }(a+1)(b+1)(a+b+2) \\
&\hspace{5cm}\textnormal{Tr}[\pi_{\lambda(a,b)}(gK)^{*}\Phi(gK,(a,b))a(gK,(a,b))]\frac{d{\mu_{\textnormal{SU}(3)}}(g)}{2}
\end{align*} where we have denoted $\pi_{\lambda(a,b)}(g):=\pi_{\lambda(a,b)}(\vartheta^{-1}(g)),$ $\Phi(g,(a,b)):=\Phi(\vartheta^{-1}(g),(a,b))$ and $a(g,(a,b)):=a(\vartheta^{-1}(g),(a,b)).$ If we consider the  parametrization of $\textnormal{SU}(3)$ (see, e.g., Bronzan \cite{SU3}), $$g\equiv g(\theta_1,\theta_2,\theta_3,\phi_1,\phi_2,\phi_3,\phi_4,\phi_5):= (u_{ij})_{i,j=1,2,3}, $$ where $0\leq \theta_i\leq \frac{\pi}{2},\,0\leq \phi_i\leq 2\pi,$ and 
\begin{itemize}
\item $u_{11}=\cos\theta_1 \cos\theta_2 e^{i\phi_1}$
\item $ u_{12}=\sin\theta_1 e^{i\phi_3} $
\item $ u_{13} =\cos\theta_1\sin\theta_2 e^{i\phi_4} $
\item $u_{21} =\sin\theta_1 \sin\theta_3 e^{-i\phi_4-i\phi_5} -\sin\theta_1 \cos\theta_2 \cos\theta_3 e^{i\phi_1+i\phi_2-i\phi_3} $
\item $u_{22} =\cos\theta_1 \cos\theta_3 e^{i\phi_2}   $
\item $u_{23}=-\cos\theta_1\sin\theta_3e^{-i\phi_1-i\phi_5}-\sin\theta_1\sin\theta_2 \cos\theta_3 e^{ i\phi_2-i\phi_3+i\phi_4 }$
\item $u_{31}= -\sin\theta_1\cos\theta_2\sin\theta_3 e^{i\phi_1-i\phi_3+i\phi_5} -\sin\theta_2\cos\theta_3e^{-i\phi_2-i\phi_4}  $

\item $u_{32}=\cos\theta_1\sin\theta_3e^{i\phi_5}$
\item $u_{33} =\cos\theta_2\cos\theta_3 e^{-i\phi_1-i\phi_2}  -\sin\theta_1\sin\theta_2\sin\theta_3e^{-i\phi_3+i\phi_4+i\phi_5}, $
\end{itemize}

then, the group measure is the determinant  given by
\begin{equation}
d{\mu_{\textnormal{SU}(3)}}(g)=\frac{1}{2\pi^5}\sin\theta_1 \cos^3\theta_1\sin\theta_2\cos\theta_2\sin\theta_3 \cos\theta_3d\theta_1 d\theta_2 d\theta_3 d\phi_1 d\phi_2 d\phi_3 d\phi_4 d\phi_5,
\end{equation} and we have the trace formula \small{
\begin{align*}
{\textnormal{Tr}(F)}
=\int\limits_{0}^{\pi/2}\int\limits_{0}^{\pi/2}\int\limits_{0}^{\pi/2}\int\limits_{0}^{2\pi} \int\limits_{0}^{2\pi} \int\limits_{0}^{2\pi} \int\limits_{0}^{2\pi} \int\limits_{0}^{2\pi}\sum_{ (a,b)\in\mathbb{N}_{00}^2  }(a+b+ab+1)(a+b+2) 
\textnormal{Tr}[\pi_{\lambda(a,b)}(g(\theta,\phi)K)^{*} \\
\Phi(g(\theta,\phi)K,(a,b))  
a(g(\theta,\phi)K,(a,b))]\\
\times \frac{1}{4\pi^5}  \sin\theta_1 \cos^3\theta_1\sin\theta_2\cos\theta_2\sin\theta_3 \cos\theta_3d\theta_1 d\theta_2 d\theta_3 d\phi_1 d\phi_2 d\phi_3 d\phi_4 d\phi_5,
\end{align*}} with  $(\theta,\phi)=(\theta_1,\theta_2,\theta_3,\phi_1,\phi_2,\phi_3,\phi_4,\phi_5).$
\end{example}

\noindent {\bf{Acknowledgements:}} I would like to thanks Majid Jamalpour Birgani who provide me a preprint of the reference \cite{Majid}.

\bibliographystyle{amsplain}

\end{document}